\documentclass[11pt]{article}
\usepackage{amsfonts}
\usepackage{graphics}
\usepackage{indentfirst}
\usepackage{color}
\usepackage{cite}
\usepackage{latexsym}
\usepackage[paper=a4paper, left=1.6cm, right=1.6cm, top=1.8cm, bottom=1.6cm, headheight=5.5pt, footskip=0.8cm, footnotesep=0.8cm, centering, includefoot]{geometry}
\usepackage{amsmath}
\allowdisplaybreaks
\usepackage{amssymb}
\usepackage[colorlinks, linkcolor=red]{hyperref}
\hypersetup{urlcolor=red, citecolor=red}
\usepackage[dvips]{epsfig}
\usepackage{amscd}

\usepackage{amsthm}
\usepackage{mathrsfs}
\usepackage{verbatim}
\newtheorem{Theorem}{Theorem}[section]
\newtheorem{Lemma}{Lemma}[section]

\newtheorem{Remark}{Remark}[section]
\newtheorem{Proposition}{Proposition}[section]

\DeclareMathOperator{\loc}{loc}
\DeclareMathOperator{\divv}{div}

\newcommand{\na}{\nabla}

\newcounter{RomanNumber}

\def\be{\begin{equation}}
\def\en{\end{equation}}
\def\bs{\begin{split}}
\def\es{\end{split}}

\makeatletter
\@addtoreset{equation}{section}
\makeatother
\makeatletter
\@addtoreset{equation}{section}
\makeatother

\title{Global well-posedness for the compressible Navier--Stokes equations with vacuum and smallness on coefficient-coupled scaling invariant quantity
\thanks{This research was partially supported by  National Natural Science Foundation of China (Nos. 12301287 and 12371227).}
}

\author{Hao Xu$\,^{\rm 1}\,$,\
Xin Zhong$\,^{\rm 2}\,$ {\thanks{Corresponding author. E-mail addresses:
haoxu@lzu.edu.cn (H. Xu), xzhong1014@amss.ac.cn (X. Zhong).}}\date{}\\
\footnotesize $^{\rm 1}\,$
School of Mathematics and Statistics, Lanzhou University,
Lanzhou 730000, P. R. China\\
\footnotesize $^{\rm 2}\,$ School of Mathematics and Statistics, Southwest University, Chongqing 400715, P. R. China}

\begin{document}
\maketitle

\begin{abstract}
We investigate the Cauchy problem of three-dimensional compressible
Navier--Stokes equations with far-field vacuum. Based on delicate energy estimates and structures of the systems under consideration, we show the global well-posedness and decay rates of strong solutions provided that some coefficient-coupled scaling invariant initial quantity is suitably small. In particular, {\it our smallness conditions are independent of any initial data and known parameters in the systems}. Moreover, there is no need to require the compatibility conditions on the initial data. Our results improve previous works.
\end{abstract}

\textit{Key words and phrases}. Compressible Navier--Stokes equations; global strong solutions; decay rates; vacuum.

2020 \textit{Mathematics Subject Classification}. 76W05; 76N10.

\tableofcontents

\section{Introduction}\label{sec1}

The non-isentropic compressible Navier--Stokes equations in $\mathbb{R}^3$ are written as follows:
\begin{equation}\label{1.1}
\begin{cases}
\rho_t +\divv(\rho u) = 0,\\
(\rho u)_t +\divv(\rho u \otimes u) =\divv\mathbb{T},\\
( \rho E)_t +\divv(\rho E u ) =\divv(\mathbb{T} u)+\kappa\Delta\theta,
\end{cases}
\end{equation}
where the unknown functions $\rho$, $u=(u^1,u^2,u^3)^\top$, and $\theta$ are the fluid density, velocity, and temperature, respectively.
The total energy $E$ is given by
\begin{equation*}
E \triangleq e + \frac{1}{2} |u|^2,  \ \  e = \frac{{\rm R} }{\gamma-1} \theta \triangleq {\rm c_V} \theta ,
\end{equation*}
where $e$ is the specific internal energy and the positive parameters $\gamma>1, {\rm R}, \kappa$, and ${\rm c_V}$ are the adiabatic exponent, perfect gas constant, heat-conductivity coefficient, and specific heat at constant volume, respectively.
The Cauchy stress $\mathbb{T}$ is determined through the Newton rheological law
\begin{equation*}
\mathbb{T} \triangleq \mu \big(\nabla u + (\nabla u)^\top \big) + \lambda\divv u \mathbb{I}_3 - P(\rho, e) \mathbb{I}_3,
\end{equation*}
where $\mathbb{I}_3$ is a $3 \times 3$ unit matrix, $\mu$ and $\lambda$ are the viscosity coefficients  satisfying
\begin{equation*}
\mu>0, \ \  3 \lambda + 2 \mu \geq 0.
\end{equation*}
The state equation of the pressure $P = P(\rho, e)$ is of ideal polytropic gas type
\begin{equation*}
P(\rho,e) = (\gamma-1) \rho e \triangleq{\rm R} \rho \theta.
\end{equation*}
For isentropic fluids, the compressible Navier--Stokes equations become
\begin{equation}\label{1.2}
\begin{cases}
\rho_t + \divv(\rho u) = 0,\\
(\rho u)_t +  \divv(\rho u \otimes u)+\nabla P= \mu \Delta u + (\mu + \lambda) \nabla \divv u.
\end{cases}
\end{equation}
Here $P$ satisfies the equation of state of an ideal fluid
\begin{equation*}
P = a \rho^{\gamma}, \ \ a > 0, \ \ \gamma > 1.
\end{equation*}

The compressible Navier--Stokes system is one of the basic physical models arising in fluid mechanics. The mathematical theory of such a model is well developed in the last decades due to its physical importance and mathematical challenge.
There are so many known results concerning the global existence of solutions to
\eqref{1.1} and \eqref{1.2}. In the absence of vacuum (i.e., $\rho=0$),
please refer to \cite{CD10,CCZ10,Da00,DM17,MN1980,Hoff95,Hoff95*,Ho1997} and references contained therein.

However, as pointed out by many mathematicians (see, e.g., \cite{DM23,HLX2012,M1,M2}), the mathematical analysis of compressible fluid near vacuum regions presents significant challenges due to the inherent singularity and degeneracy of the governing equations. We briefly review several results concerning the global existence of solutions to the isentropic compressible Navier--Stokes system \eqref{1.2} with vacuum. In the pioneering work \cite{Li1998}, P.-L. Lions established the global existence of large weak solutions in $\mathbb{R}^3$ with $\gamma\geq\frac95$ via renormalized techniques and the effective viscous flux arguments. Subsequently, the restriction on the adiabatic index was extended in \cite{EF01,JZ01,JZ03}. Yet the question of global regularity and uniqueness for such weak solutions remains completely open. There are also some results regarding the global well-posedness of strong (or classical) solutions. Huang--Li--Xin \cite{HLX2012} proved the global existence and uniqueness of classical solutions in $\mathbb{R}^3$ with smooth initial data which are of small total energy but possibly large oscillations, where the far-field density could be either vacuum or non-vacuum. In \cite{HP24}, Hong et al. showed that the initial total energy could be large as long as the adiabatic exponent is sufficiently close to one. Based on some key {\it a priori} decay with rates (in large time) and a spatially weighted energy method, Li and Xin \cite{LX2019} dealt with the two-dimensional (2D for short) Cauchy problem with far-field vacuum under the condition that the initial total energy is suitably small. It should be mentioned that a central aspect of their studies in \cite{HLX2012,HP24,LX2019} is the derivation of the time-independent upper bound of the density. In fact, this coincides with the outcomes of the previous blow-up criterion \cite{HLX11}. A significant progress was recently
made in \cite{DM23}, where a unique global solution with vacuum in $\mathbb{T}^3$ was derived under a suitable scaling invariant smallness condition on the velocity. Motivated by \cite{LX2019-1}, where Lei and Xin discovered a scaling-invariant property of solutions under the scaling transformation
$$
\rho^k (x,t) = k^{\frac{1}{\gamma}} \rho\Big(k^{\frac{\gamma + 1}{2 \gamma}} x, k t\Big), \ \ u^k (x,t) = k^{\frac{\gamma - 1}{2 \gamma}} u\Big(k^{\frac{\gamma + 1}{2 \gamma}} x, k t\Big),
$$
Wen \cite{W2025} demonstrated the global existence of strong solutions to \eqref{1.2} in $\mathbb{R}^3$ provided that the scaling-invariant initial quantity
\begin{equation}\label{1.3}
\|\rho_0\|_{L^{\infty }}^3 \left( \|\sqrt{\rho_0} u_0\|_{L^2}^2 + \|\rho_0^{\gamma}\|_{L^1} \right) \left(\|\nabla u_0\|_{L^2}^2 + \|\rho_0^{\gamma}\|_{L^2}^2 \right)\left[1 + \|\rho_0\|_{L^{\infty }}^{3 + \gamma} \left( \|\sqrt{\rho_0} u_0\|_{L^2}^2 + \|\rho_0^{\gamma}\|_{L^1} \right)^2 \right]
\end{equation}
is properly small.

Let's turn our attention to the non-isentropic compressible Navier--Stokes equations \eqref{1.1}. As a system, \eqref{1.1} contains more complicated structures than the isentropic model. Indeed, it cannot be reduced to \eqref{1.2} even with zero temperature. Their distinctive features make analytical studies a great challenge but offer new opportunities. In 2004, Feireisl \cite{Fe2004} generalized the results of P.-L. Lions \cite{Li1998} to the case of compressible viscous heat-conductive fluids, and proved the global existence of {\it variational weak solutions} for arbitrary data in $N$-dimensional domains. As for strong solutions, it was shown in \cite{HL2018,WZ2017} the global existence and uniqueness of strong solutions in $\mathbb{R}^3$ with far-field non-vacuum and vacuum respectively which are of small energy but possibly large oscillations. It is worth noting that the basic energy inequality does not provide any dissipation estimates in the non-isentropic case, while some types of dissipation estimates were recovered in \cite{HL2018,WZ2017} with the aid of the entropy inequality and the conservation of mass respectively. By adopting the piecewise-estimate
method, Liang \cite{L21} established the global existence and decay rates of a unique strong solution with far-field vacuum when the initial energy is small. Under less regular data and weaker compatibility condition, Lai--Xu--Zhang \cite{LXZ2022} proved the global well-posedness and the exponential decay rates of classical solutions via using time-weighted estimates to avoid the presence of temperature initial-layer. Furthermore, Li \cite{Li2020} obtained the global existence of strong solutions to the 3D Cauchy problem under suitable smallness assumption on the initial data and the additional technical condition $2\mu>\lambda$. A peculiar aspect of his result is that the smallness assumption is imposed on a quantity that is scaling invariant with respect to the transformation
\begin{align*}
(\rho_{0\eta}(x),u_{0\eta}(x),\theta_{0\eta}(x))
=(\rho_{0}(\eta x),\eta u_{0}(\eta x),\eta^2\theta_{0}(\eta x)),\ \eta\neq0.
\end{align*}
More recently, a new scaling-invariant initial quantity
\begin{equation}\label{1.4}
\|\rho_0\|_{L^\infty}
\left(\|\rho_0\|_{L^3}+\|\rho_0\|_{L^\infty}^2\|\sqrt{\rho_0} u_0\|_{L^2}^2\right)
\left(\|\nabla u_0\|_{L^2}^2 +\|\rho_0\|_{L^\infty}\|\sqrt{\rho_0} \theta_0\|_{L^2}^2\right)
\end{equation}
was proposed in \cite{W2025}, where the author removed the restriction $2\mu>\lambda$ in \cite{Li2020} and established the global existence and uniqueness of strong solutions.

It should be emphasized that all results concerning strong solutions mentioned above require some smallness condition depending on the initial data and/or known parameters in the systems. However, from the viewpoint of partial differential equations (PDE), there are advantages of a parameter- and initial-data-independent smallness assumption in the study of global solutions to PDE. For example, such an assumption facilitates perturbation analysis and dynamic bifurcation research. On the other hand, for practical problems, the parameter- and initial-data-free smallness condition facilitates direct use of measured data and realistic physical constants in numerical simulations of PDE. Thus, this motivates us to investigate global strong solutions to the systems \eqref{1.2} and \eqref{1.1} with vacuum under some smallness conditions independent of initial data and model parameters.

\section{Main results}

Before stating our main results, we introduce the following notations that will be used throughout the paper.
\subsection{Notations}

\begin{itemize}
\item [\quad(i)] $\int f(x) \mathrm{d} x\triangleq \int_{\mathbb{R}^3} f(x) \mathrm{d} x$.
\item [\quad(ii)] For $p \in [1,\infty ]$ and $k \in \mathbb{Z}^+$, we denote the usual Sobolev spaces by
\begin{align*}
\begin{cases}
L^p\triangleq L^p(\mathbb{R}^3), \ \ W^{k,p}\triangleq\left\{u\in L^p ~|~ \nabla^iu\in L^p,\ i=1,\ldots,k\right\},\ \  H^k \triangleq W^{k,2},\\
D^{k,p}\triangleq\left\{u\in L^1_{\loc} ~|~ \nabla^k u\in L^p\right\},\ \
D^1\triangleq \{u \in L^6~|~\|\nabla u\|_{L^2} < \infty \}, \ \ D^k \triangleq D^{k,2}, \\
\|(\cdot, \ldots,\cdot)\|_{X} \triangleq \|\cdot\|_{X} + \ldots + \|\cdot \|_{X},\ \ \|\cdot\|_{X \cap Y} \triangleq \|\cdot\|_{X} + \|\cdot\|_{Y}.
\end{cases}
\end{align*}
\item [\quad(iii)] $F=(2\mu+\lambda)\divv u-P$ is the effective viscous flux.
\item [\quad(iv)]  $\omega=\nabla\times u$ denotes the vorticity.
\item [\quad(v)]  $\dot v=v_t+u\cdot\nabla v$ stands for the material derivative of $v$.
\end{itemize}

\subsection{The isentropic case}

System \eqref{1.2} is supplemented with the following initial data and the far-field behavior
\begin{gather}\label{2.1}
(\rho,\rho u)(x,0)=(\rho_0,m_0)(x), \ \  x\in \mathbb{R}^3,
\\ \label{2.2}
(\rho,u)\to(0,0) \ \ {\rm as} \ \  |x|\to\infty,  \ \  {\rm for}\ \ t\geq 0.
\end{gather}

Our main result for the isentropic compressible Navier--Stokes equations reads as follows:
\begin{Theorem}\label{thm2.1}
For any given $q\in(3, 6)$, assume that
\begin{equation}\label{2.3}
0 \leq\rho_0 \in L^{\gamma} \cap H^1 \cap W^{1,q},\ \
u_0 \in D^1.
\end{equation}
There exists a positive constant $\varepsilon$ independent of $\mu, \lambda, a, \gamma$, and the initial data such that if
\begin{equation}\label{2.4}
\mathbb{E}_0\triangleq \mu^{-5} \alpha^{6} \bar{\rho}^3 \mathbb{E}_1 (0) \mathbb{E}_2 (0) \leq \varepsilon,
\end{equation}
where
\begin{gather*}
\alpha \triangleq \max\{1 + a, \gamma\},\ \ \bar{\rho} \triangleq 1 + \|\rho_0\|_{L^{\infty }},\\
\mathbb{E}_1 (0) \triangleq \int \left(\frac{1}{2} \rho_0 |u_0|^2 + \frac{a}{\gamma - 1} \rho_0^{\gamma} \right) \mathrm{d} x,\\
\mathbb{E}_2 (0)
\triangleq \int \big[\mu |\nabla u_0|^2 + (\mu + \lambda)(\divv u_0)^2 + \left(2 \mu + \lambda \right)^{-1}P_0^2 \big] \mathrm{d} x,
\end{gather*}
then the problem \eqref{1.2}, \eqref{2.1}, and \eqref{2.2} have a unique global strong solution $(\rho, u)$ on $\mathbb{R}^3 \times (0,\infty)$ satisfying
\begin{equation}\label{2.5}
0\leq \rho(x,t)\leq \frac{3}{2}\bar\rho,\ \ {for \ all}\ \  (x,t)\in \mathbb{R}^3 \times(0, \infty)
\end{equation}
and
\begin{equation}\label{2.6}
\begin{cases}
\rho \in C([0, \infty);L^2) \cap L^\infty(0, \infty; L^{\gamma} \cap H^1 \cap W^{1,q}), \ \ \rho_t \in L^\infty(0, \infty;L^2\cap L^q),\\[1mm]
\sqrt \rho u \in C ([0, \infty);L^2),\ \ u \in L^\infty(0, \infty; D^1) \cap L^2 (0, \infty; D^1),\\[1mm]
\sqrt \rho \dot{u} \in L^2 (0, \infty; L^2), \ \ t \dot{u} \in L^2 (0, \infty; D^1),\ \ \sqrt{t \rho} \dot{u} \in L^\infty(0, \infty;L^2).
\end{cases}
\end{equation}

Moreover, there exists a positive constant $C$ depending on transport coefficients and initial norms such that, for any $t \geq 1$, $p \in [2,6]$, and $r \in (1,\infty )$,
\begin{equation}\label{2.7}
\|\nabla u\|_{L^p} \leq C t^{- 1 + \frac{1}{p}},\ \
\|P\|_{L^r} \leq C t^{- 1 + \frac{1}{r}},\ \
\|(\nabla F,\nabla \omega )\|_{L^2} \leq C t^{- 1}.
\end{equation}
\end{Theorem}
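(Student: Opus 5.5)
The proof is a continuity (bootstrap) argument built on a local existence theorem for strong solutions with vacuum. Since no compatibility condition is imposed, I would use the version in which $\sqrt{t\rho}\,\dot u$ is controlled by time weights rather than $\sqrt{\rho_0}\,\dot u(0)$, in the spirit of Huang--Li--Xin and Li--Xin. With the local solution on $[0,T^*)$ in hand, I would make the a priori assumption
\[
\sup_{0\le t\le T}\|\rho\|_{L^\infty}\le 2\bar\rho, \qquad \sup_{0\le t\le T}\mathbb{E}_2(t)+\int_0^T\|\sqrt\rho\,\dot u\|_{L^2}^2\,dt\le 2\mathbb{E}_2(0),
\]
where $\mathbb{E}_2(t)$ is the same quantity evaluated at time $t$. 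The task is then to show that $\mathbb{E}_0\le\varepsilon$ improves these to $\tfrac32\bar\rho$ and $\tfrac32\mathbb{E}_2(0)$.

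The energy part has three steps.
\begin{enumerate}
\item The basic energy identity gives $\mathbb{E}_1(t)+\mu\int_0^t\|\nabla u\|_{L^2}^2\le \mathbb{E}_1(0)$.
\item I would test the momentum equation with $\dot u$ and use the pressure equation $P_t+\divv(Pu)+(\gamma-1)P\divv u=0$. After rewriting $\divv u=(F+P)/(2\mu+\lambda)$, this gives
\[
\frac{d}{dt}\mathbb{E}_2+\|\sqrt\rho\,\dot u\|_{L^2}^2\lesssim \alpha\int|\nabla u|^3+\frac{\alpha}{2\mu+\lambda}\int P^2|\nabla u| .
\]
\item The cubic term is handled with the elliptic estimates $\|\nabla F\|_{L^2}+\mu\|\nabla\omega\|_{L^2}\le \|\rho\dot u\|_{L^2}$ and the Gagliardo--Nirenberg inequality. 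The crux is to track every power of $\mu,\alpha,\bar\rho$ explicitly. The resulting bound should be $C\mu^{-5/2}\alpha^3\bar\rho^{3/2}\,(\mathbb{E}_1\mathbb{E}_2)^{1/2}$ times the dissipation, plus lower-order terms. After integration in time, the energy bound from step 1 absorbs $\int\|\nabla u\|^2$.
\end{enumerate}
This is exactly where the combination $\mu^{-5}\alpha^6\bar\rho^3\mathbb{E}_1\mathbb{E}_2$ should arise. I would arrange the Hölder and interpolation exponents so that every term is bounded by $\mathbb{E}_0^{1/2}$ times dissipation and can be absorbed, which improves the $\mathbb{E}_2$ bound.

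For the density upper bound, I would write the mass equation along particle paths as
\[
(2\mu+\lambda)\,\frac{D}{Dt}\log\rho=-P-F .
\]
The $-P$ term helps. For $\int_0^t\|F\|_{L^\infty}$ I would use the Zlotnik-type lemma, splitting $\|F\|_{L^\infty}\le C\|F\|_{L^2}^{1/4}\|\nabla F\|_{L^6}^{3/4}$ or using $L^2$ and $L^6$ bounds of $\nabla F$. This requires a higher time-weighted estimate for $t\|\sqrt\rho\dot u\|^2$ and $\int t\|\nabla\dot u\|^2$, obtained by applying $\partial_t+\divv(u\,\cdot)$ to the momentum equation. The short-time part $t\le1$ and the long-time part $t\ge1$ are treated separately. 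The aim is $\frac{1}{2\mu+\lambda}\int_0^T\|F\|_{L^\infty}\le C\,\mathbb{E}_0^{\beta}$ for some $\beta>0$, which gives $\rho\le\bar\rho\,e^{C\varepsilon^\beta}\le\tfrac32\bar\rho$.

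\textbf{Main obstacle.} I expect the hardest step to be getting constants that are \emph{independent of $\mu,\lambda,a,\gamma$}. Every absorption in steps 2--3 and in the density estimate must be scaled so that only the single dimensionless quantity $\mathbb{E}_0$ appears. I would first check scaling: under $\rho\mapsto\rho$, $u\mapsto\mu u$, $t\mapsto t/\mu$, and the pressure normalization, $\mathbb{E}_0$ is invariant. This reduces the problem to $\mu=1$ and $a$ normalized, after which each estimate involves only $\alpha$ and $\bar\rho$.

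Once the a priori bounds hold, the higher-order estimates follow by standard arguments. These are the $W^{1,q}$ bound on $\rho$ via Beale--Kato--Majda-type control of $\int\|\nabla u\|_{L^\infty}$, and the time-weighted $\dot u$ bounds. Combined with local existence, they give the global solution and \eqref{2.6}. Uniqueness follows from a standard Lagrangian or weighted $L^2$ argument.

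Finally, the decay rates \eqref{2.7} come from three ingredients: $\|\rho\|_{L^1}$ conservation, the time-weighted dissipation $t\|\sqrt\rho\dot u\|^2\le C$ improved to $t^2\|\sqrt\rho\dot u\|^2\le C$, and the estimate $\frac{d}{dt}\|P\|_{L^r}^r+c\|P\|_{L^{r+1}}^{r+1}\lesssim\ldots$. I would then interpolate with Gagliardo--Nirenberg between $L^2$ and the $\nabla F,\nabla\omega$ bounds.
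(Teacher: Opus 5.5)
\textbf{The gap is in the density bound.} Your density step hinges on proving $\frac{1}{2\mu+\lambda}\int_0^T\|F\|_{L^\infty}\,dt\le C\mathbb{E}_0^{\beta}$ uniformly in $T$. You plan to get this from time-weighted $\dot u$ estimates split at $t=1$. With the tools you list, this bound cannot be made to depend on $\mathbb{E}_0$ alone.
\begin{itemize}
\item The split at $t=1$ already introduces a time scale foreign to $\mathbb{E}_0$.
\item With only $\sup t\|\sqrt\rho\dot u\|_{L^2}^2$ and $\int t\|\nabla\dot u\|_{L^2}^2$, the integral $\int_1^T\|\sqrt\rho\dot u\|_{L^2}^{1/2}\|\nabla\dot u\|_{L^2}^{1/2}dt$ grows like $T^{1/4}$. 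You therefore need $t^2$-weights, and those require $\int_0^T t\|\sqrt\rho\dot u\|_{L^2}^2$. Obtained the natural way, that requires $\int_0^T\mathbb{E}_2\,dt$, which contains $(2\mu+\lambda)^{-1}\int_0^T\|P\|_{L^2}^2\,dt$. In the isentropic case the basic energy gives no pressure dissipation; the only one available is $(2\mu+\lambda)^{-2}\int_0^T\|P\|_{L^3}^3\,dt$.
\item The pressure term $\gamma\int P|\nabla u||\nabla\dot u|$ in the $\dot u$-equation brings in $a\bar\rho^\gamma$, which $\mathbb{E}_0$ does not see. A thin density bump with $u_0=0$ has $\mathbb{E}_0\to0$ while $a\bar\rho^\gamma/\mu$ stays fixed.
\end{itemize}
The paper does run a weighted $L^1_tL^\infty_x$ argument in the non-isentropic case, but there $\bar\rho$ contains $\|\rho_0\|_{L^1}$ and the pressure decays through $\theta$.

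The missing idea is that $\int_0^T\|F\|_{L^\infty}$ need not be small at all. The paper writes $F$ along particle paths as $-\frac{D}{Dt}[(-\Delta)^{-1}\divv(\rho u)]+[u_i,\mathcal{R}_{ij}](\rho u_j)$.
\begin{itemize}
\item The first piece is bounded by $\mu^{-1}\|\rho u\|_{L^2}^{1/2}\|\rho u\|_{L^6}^{1/2}\lesssim\mathbb{E}_0^{1/8}$.
\item The commutator is bounded by $\bar\rho\|\nabla u\|_{L^2}\|\nabla u\|_{L^6}$. This needs only the \emph{unweighted} bounds on $\int\mu\|\nabla u\|_{L^2}^2$, $\int\|\sqrt\rho\dot u\|_{L^2}^2$ and $(2\mu+\lambda)^{-2}\int\|P\|_{L^3}^3$.
\item The part $\int_{t_1}^{t_2}\|\nabla u\|_{L^2}\|\rho^\gamma\|_{L^6}$ is not small uniformly in time. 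Young's inequality splits it into $K\mathbb{E}_0^{1/3}+\frac{a\bar\rho^\gamma}{2\mu+\lambda}(t_2-t_1)$.
\item The linear-in-time part is absorbed in Zlotnik's lemma by $g=-a\rho^\gamma/(2\mu+\lambda)$ on $\{\rho\ge\bar\rho\}$, i.e.\ with $N_1=\frac{a\bar\rho^\gamma}{2\mu+\lambda}>0$.
\end{itemize}
So the $-P$ term you say ``helps'' is in fact indispensable. Your conclusion $\rho\le\bar\rho e^{C\varepsilon^\beta}$ is effectively Zlotnik with $N_1=0$ and discards it.

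\textbf{The same ingredient is missing from your energy step.} Bounding the pressure contribution by $\frac{\alpha}{2\mu+\lambda}\int P^2|\nabla u|$ throws away its sign. The paper multiplies $P_t+\gamma P\divv u+u\cdot\nabla P=0$ by $2K_1P/(2\mu+\lambda)$ and substitutes $\divv u=(F+P)/(2\mu+\lambda)$. This produces the damping $\frac{K_1(2\gamma-1)}{(2\mu+\lambda)^2}\int P^3$, plus a term $\int P^2F\le\|P\|_{L^3}^2\|F\|_{L^3}$ that can be absorbed. This $L^3_tL^3_x$ pressure dissipation does two jobs:
\begin{itemize}
\item it absorbs the $\|P\|_{L^3}$ parts of $\|\nabla u\|_{L^3}$ coming from \eqref{3.7};
\item it is what the density step uses, through $\|\rho^\gamma\|_{L^6}^2\le\|\rho\|_{L^\infty}^{\gamma}\|\rho^\gamma\|_{L^3}$.
\end{itemize}
Without it, your $P^2|\nabla u|$ term can only be closed using $\|P\|_{L^\infty}$, which again is not controlled by $\mathbb{E}_0$.

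\textbf{Two smaller points.} First, the reduction to $\mu=1$ is not available. The rescaling that normalizes $\mu$ sends $a\mapsto a/\mu^2$, which changes $\alpha=\max\{1+a,\gamma\}$, so $\mathbb{E}_0$ is not invariant; $\alpha$ and $\bar\rho$ are inhomogeneous anyway. Second, improving $2\mathbb{E}_2(0)$ to $\frac32\mathbb{E}_2(0)$ cannot close. The controlled functional, with the cross term $2\int P\divv u$ and the extra $K_1\|P\|_{L^2}^2/(2\mu+\lambda)$, is equivalent to $\mathbb{E}_2$ only up to a constant $K>1$. Bootstrap on $\mathbb{E}(t)\le\mathbb{E}_0^{1/2}$ as the paper does.
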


\begin{Remark}
It should be noted that our smallness condition \eqref{2.4} is independent of any known parameters in \eqref{1.2} and initial data, which is in sharp contrast to \cite{DM23,HLX2012,HP24,W2025} where they established global strong solutions under some smallness condition depending on the initial data and/or parameters. Moreover, there is no need to require the compatibility conditions on the initial data.
\end{Remark}

\begin{Remark}
It is worth mentioning that Wen \cite{W2025} established the global well-posedness of strong solutions to the problem \eqref{1.2}, \eqref{2.1}, and \eqref{2.2} provided the scaling invariant initial quantity \eqref{1.3} is suitably small. By contrast, our Theorem \ref{thm2.1} eliminates the term $\left[1 + \|\rho_0\|_{L^{\infty }}^{3 + \gamma} \left( \|\sqrt{\rho_0} u_0\|_{L^2}^2 + \|\rho_0^{\gamma}\|_{L^1} \right)^2 \right]$ in \eqref{1.3}.
\end{Remark}

We mainly use global {\it a priori} estimates (see Proposition \ref{pro4.1}) and Serrin-type blow-up criterion \eqref{3.1} to prove Theorem \ref{thm2.1}. It should be pointed out that the approaches in \cite{DM23,HLX2012,HP24,W2025} cannot be applied to the present framework directly, since their smallness assumptions are imposed on the initial data and known parameters in \eqref{1.2}. Hence, in order to derive the desired global \emph{a priori} estimates under the condition \eqref{2.4}, several new ideas are needed.

Inspired by \cite{W2025,LX2019}, we attempt to establish an estimate on the $L^{\infty}_t L^2_x$-norm of $\na u$, which necessitates an estimate on the $L^3_t L^3_x$-norm of the pressure (see \eqref{4.6}). To this end, we assume the {\it a priori hypothesis} \eqref{3.4}. Then, multiplying \eqref{4.7} by $\left( 2 \mu + \lambda\right)^{-1} P$ and exploiting the delicate energy estimates, we obtain the desired bound for $\|\na u\|_{L^{\infty}_t L^2_x}$.
In this process, the main difficulties stem from nonlinear and strongly coupled terms.
To overcome these obstacles, we introduce a new quantity $\mathbb{E}_0$ and establish coefficient- and initial-data-independent global {\it a priori} estimates with the aid of the effective viscous flux and vorticity provided $\mathbb{E}_0$ is suitably small.
The remaining challenge is to close the bootstrap argument. Compared with \cite{W2025}, closing the global {\it a priori} assumption for the density is considerably more challenging, since the smallness condition $\mathbb{E}_0$ defined in \eqref{2.4} excludes the term
$
\left[1 + \|\rho_0\|_{L^{\infty }}^{3 + \gamma} \left( \|\sqrt{\rho_0} u_0\|_{L^2}^2 + \|\rho_0^{\gamma}\|_{L^1} \right)^2 \right].
$
The novelty of our analysis is that the term
$
\int^{t_2}_{t_1} \|\na u\|_{L^2} \|\rho^{\gamma}\|_{L^6} {\rm d} \tau
$
can be bounded by $\mathbb{E}_1 (0)^{\frac{1}{3}} \mathbb{E}_2 (0)^{\frac{1}{3}}$ and $\frac{a \bar{\rho}^{\gamma}}{2 \mu + \lambda } \left( t_2 - t_1 \right)$, in contrast to the previous bound relying on $\mathbb{E}_1 (0)^{\frac{3}{4}} \mathbb{E}_2 (0)^{\frac{1}{4}}$ and $\frac{a \|\rho_0\|_{L^{\infty}}^{\gamma}}{2 \mu + \lambda } \left( t_2 - t_1 \right)$ (see \cite[Lemma 2.9]{W2025}).
This improved bound enables us to close the {\it a priori} assumption of the density provided $\mathbb{E}_0$ is small enough.

\subsection{The non-isentropic case}

We consider the Cauchy problem of system \eqref{1.1} with the initial data and far-field behavior
\begin{gather}\label{2.8}
(\rho,\rho u,\rho\theta )(x,0)=(\rho_0,m_0, n_0)(x), \ \  x\in \mathbb{R}^3,
\\ \label{2.9}
(\rho,u,\theta )\to(0,0,0) \ \ {\rm as} \ \  |x|\to\infty,  \ \  {\rm for}\ \ t\geq 0.
\end{gather}

Our main result for the non-isentropic compressible Navier--Stokes equations can be stated as follows:
\begin{Theorem}\label{thm2.2}
For any given $q\in(3, 6)$, assume that
\begin{equation}\label{2.10}
0 \leq \rho_0 \in L^{\frac{3}{2}} \cap H^1 \cap W^{1,q}, \ \ u_0 \in D^1,\ \ \sqrt{\rho_0}\theta_0\in L^2.
\end{equation}
There exists a positive constant $\varepsilon$ independent of $\mu, \lambda, \kappa, {\rm c_V}, \gamma, {\rm R}$, and the initial data such that if
\begin{equation}\label{2.11}
\mathbb{F}_0
\triangleq \mu^{-\frac{3}{2}} \Theta^{\frac{81}{2}} \bar{\rho}^{13} \mathbb{F}_1 (0) \mathbb{F}_2(0) \leq \varepsilon,
\end{equation}
where
\begin{gather*}
\Theta \triangleq \max\left\{1 + {\rm c_V}, \gamma, 1 + \mu^{-1}, 1 + \kappa^{-1}, 1 + \mu^{-1} |\lambda|\right\},\ \bar{\rho} \triangleq 1 + \|\rho_0\|_{L^1 \cap L^{\infty }},\\
\mathbb{F}_1 (0) \triangleq {\rm c_V} \|\rho_0 \theta _0\|_{L^1} + \|\sqrt{\rho_0} u_0\|_{L^2}^2,
\\
\mathbb{F}_2 (0) \triangleq \mu \|\nabla u_0\|_{L^2}^2 + \left(\mu + \lambda \right) \|\divv u_0\|_{L^2}^2 + \Psi {\rm c_V} \|\sqrt{\rho_0} \theta _0\|_{L^2}^2,\
\Psi \triangleq \mu^{-1} \Theta^3 \left(1 + \kappa \right) \bar{\rho},
\end{gather*}
then the problem \eqref{1.1}, \eqref{2.8}, and \eqref{2.9} have a unique global strong solution $(\rho, u, \theta)$ on $\mathbb{R}^3 \times (0,\infty)$ satisfying
\begin{equation}\label{2.12}
0\leq \rho(x,t)\leq \frac{3}{2}\bar\rho,\ \ {for \ all}\ \  (x,t)\in \mathbb{R}^3 \times(0, \infty)
\end{equation}
and
\begin{equation}\label{2.13}
\begin{cases}
\rho \in C([0,\infty);L^2) \cap L^\infty(0,\infty;L^{\frac{3}{2}} \cap H^1 \cap W^{1,q}), \ \rho_t \in L^\infty(0,\infty;L^2\cap L^q),\\[1mm]
(\sqrt \rho u, \sqrt \rho \theta ) \in C ([0,\infty);L^2), \ (u, \sqrt t \theta ) \in L^\infty(0,\infty; D^1) \cap L^2 (0,\infty; D^1 \cap D^{2}),\\[1mm]
(\sqrt \rho u_t, \sqrt{t \rho} \theta _t) \in L^2 (0,\infty; L^2), \ \sqrt t u \in L^\infty(0,\infty; D^{2}),\ 0 \leq \theta \in L^2 (0,\infty;D^1),\\[1mm]
(\sqrt{t \rho} u_t, t \sqrt \rho \theta _t, t \nabla^2 \theta ) \in L^\infty(0,\infty;L^2), \ t \theta _t \in L^2 (0,\infty; D^1), \ u \in L^1 (0,\infty; D^{2,q}).
\end{cases}
\end{equation}

Moreover, there exist two positive constants $C$ and $\sigma \in (0,1)$, which may depend on $\mu, \lambda, \kappa, {\rm c_V}, {\rm R}, q$, and the initial norms, such that
\begin{equation}\label{2.14}
\|\nabla \rho\|_{L^2} + \|\nabla \rho\|_{L^q}  \leq C,\quad \forall \ t\geq0,
\end{equation}
and for any $t \geq 1$,
\begin{equation}\label{2.15}
\|\rho_t\|_{L^2\cap L^q}^2 + \|(\sqrt \rho u,  \sqrt \rho u_t,\sqrt \rho \theta , \sqrt \rho \theta _t)\|_{L^2}^2 + \|(\nabla u, \nabla \theta )\|_{H^1}^2 \leq C e^{-\sigma t}.
\end{equation}
\end{Theorem}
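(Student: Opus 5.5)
The plan is to follow the same architecture as for Theorem \ref{thm2.1}. First, invoke the local existence theory for strong solutions with vacuum, which needs no compatibility condition since we work with time-weighted norms. Next, invoke a Serrin-type blow-up criterion for \eqref{1.1}: if $T^*<\infty$, then $\lim_{T\to T^*}\big(\|\rho\|_{L^\infty(0,T;L^\infty)}+\|u\|_{L^s(0,T;L^r)}\big)=\infty$ for $\frac2s+\frac3r\le1$, $r\in(3,\infty]$. Global existence then reduces to global a priori bounds, and the core is a bootstrap. On $[0,T]$ assume
\begin{equation*}
\sup_{t\le T}\|\rho\|_{L^\infty}\le 2\bar\rho,\qquad \sup_{t\le T}\mathbb{F}_2(t)+\int_0^T\big(\|\sqrt\rho\dot u\|_{L^2}^2+\kappa\Psi{\rm c_V}\|\nabla\theta\|_{L^2}^2\big)\,{\rm d}t\le 2\mathbb{F}_2(0).
\end{equation*}
Here $\mathbb{F}_2(t)$ is the same functional as $\mathbb{F}_2(0)$ with the initial data replaced by the solution at time $t$. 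I will then show that, provided $\mathbb{F}_0\le\varepsilon$, both bounds hold with $2$ replaced by $\frac32$. Throughout, every constant must be tracked explicitly in $\mu,\Theta,\bar\rho$, so that $\varepsilon$ is absolute.

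Key steps, in order:

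(i) Energy and dissipation. The total energy gives $\sup_t\big(\|\sqrt\rho u\|_{L^2}^2+{\rm c_V}\|\rho\theta\|_{L^1}\big)\le\mathbb{F}_1(0)$, and mass conservation keeps $\|\rho\|_{L^1}$ fixed. Since the energy identity provides no dissipation, I recover $\int_0^T\|\nabla u\|_{L^2}^2$ from the momentum equation tested with $u$. This produces the term $\int P\,\divv u\le\|\rho\theta\|_{L^2}\|\nabla u\|_{L^2}$, and I control $\|\rho\theta\|_{L^2}\le C\bar\rho^{1/2}\|\sqrt\rho\theta\|_{L^2}$ by the $\sqrt\rho\theta$ estimate. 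The $\sqrt\rho\theta$ estimate in turn comes from multiplying the temperature equation ${\rm c_V}\rho\dot\theta+P\divv u=\kappa\Delta\theta+\mathcal{Q}(\nabla u)$ by $\theta$, using $\|\theta\|_{L^6}\le C\|\nabla\theta\|_{L^2}$ and $\|\rho\|_{L^{3/2}}\le C\bar\rho$. This coupling is why $\Psi$ weights $\|\sqrt\rho\theta\|_{L^2}^2$ in $\mathbb{F}_2$.

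(ii) Higher estimate. I test momentum with $\dot u$ and write everything through $F$ and $\omega$, using $\|\nabla F\|_{L^2}+\mu\|\nabla\omega\|_{L^2}\le C\|\rho\dot u\|_{L^2}$. Cubic terms like $\int|\nabla u|^3$ are bounded by $\|\nabla u\|_{L^2}^{3/2}\|(\nabla F,\nabla\omega)\|_{L^2}^{3/2}$ plus pressure terms. The pressure time derivative $P_t$ is handled via the $\theta$-equation and $\kappa\|\nabla\theta\|_{L^2}^2$. This gives a differential inequality of the form $\frac{d}{dt}\mathbb{F}_2+\|\sqrt\rho\dot u\|_{L^2}^2+\dots\le C\mu^{-3}\Theta^{k}\bar\rho^{m}\,\|\nabla u\|_{L^2}^4\,\mathbb{F}_2$-type terms. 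Integrating in time against $\int\|\nabla u\|_{L^2}^2\lesssim\mathbb{F}_1(0)$ closes the bound once $\mathbb{F}_1(0)\mathbb{F}_2(0)$ times the explicit weights is small. The exponents $\frac{81}{2},13$ arise from this bookkeeping.

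(iii) Density bound. Along particle paths, $\frac{d}{dt}\rho=-\rho(F+P)/(2\mu+\lambda)\le-\rho F/(2\mu+\lambda)$, since $P\ge0$. So it suffices to show $\int_0^T\|F\|_{L^\infty}\,{\rm d}t/(2\mu+\lambda)$ is small. I split $[0,1]$ and $[1,T]$, using Gagliardo--Nirenberg $\|F\|_{L^\infty}\le C\|F\|_{L^6}^{1/2}\|\nabla F\|_{L^6}^{1/2}$ together with time-weighted bounds $t\|\nabla\dot u\|_{L^2}^2$ obtained by testing the differentiated momentum equation with $t\dot u$. These time weights replace the compatibility condition. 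In the non-isentropic case the pressure is not bounded by $\rho$, so positivity of $P$ is exactly what removes it here.

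(iv) Time-weighted $\theta$ bounds and higher regularity. I then derive the $\theta$ bounds and higher regularity ($\nabla\rho\in L^q$ via $\|\nabla u\|_{L^1_tL^\infty}$), close the blow-up criterion, and prove uniqueness by a standard weighted $L^2$ energy argument. Exponential decay follows because $\|\rho\|_{L^{3/2}}$ is bounded, which gives the Poincaré-type inequality $\|\sqrt\rho u\|_{L^2}^2\le C\|\nabla u\|_{L^2}^2$, and similarly for $\theta$. Inserted into the dissipative inequalities from (i)–(ii), this yields $\frac{d}{dt}\mathcal{E}+\sigma\mathcal{E}\le0$ for a suitable combination $\mathcal{E}$.

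The main obstacle is step (iii) with parameter-free constants. The time integral of $\|F\|_{L^\infty}$ over $[1,T]$ must be uniformly bounded, which requires the decay from (iv) to already be available inside the bootstrap. I would first attempt to get integrable decay $\|(\nabla F,\nabla\omega)\|_{L^2}\lesssim e^{-\sigma t}$ under the a priori hypothesis alone, and to check carefully that all constants are monomials in $\mu^{-1},\Theta,\bar\rho$, so that the total is dominated by a power of $\mathbb{F}_0$.
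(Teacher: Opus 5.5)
There is a genuine gap already in step (i), and step (ii) rests on it. Bounding $\int P\divv u\,\mathrm{d}x\lesssim{\rm R}\|\rho\theta\|_{L^2}\|\nabla u\|_{L^2}$ forces you to control $\int_0^T\|\rho\theta\|_{L^2}^2\,\mathrm{d}t$. The only time-integrable bound you have for this is $\|\rho\theta\|_{L^2}\le\|\rho\|_{L^3}\|\theta\|_{L^6}\lesssim\bar\rho\|\nabla\theta\|_{L^2}$. But $\int_0^T\kappa\|\nabla\theta\|_{L^2}^2\,\mathrm{d}t$ is an $\mathbb{F}_2$-level quantity. It is fed by $\Psi{\rm c_V}\|\sqrt{\rho_0}\theta_0\|_{L^2}^2$ and by viscous heating, which also heats the vacuum region. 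By contrast, the conserved $\mathbb{F}_1$-level norm $\|\rho\theta\|_{L^1}$ only sees $\theta$ where $\rho>0$.

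So what you really get is $\int_0^T\mu\|\nabla u\|_{L^2}^2\,\mathrm{d}t\lesssim\mathbb{F}_1(0)+\Theta^{m}\bar\rho^{n}\mathbb{F}_2(0)$, not the bound $\lesssim\mathbb{F}_1(0)$ you invoke in (ii). Closing (ii) would then require $\mu^{-2}\Theta^{12+m}\bar\rho^{3+n}\mathbb{F}_2(0)^2\ll1$. That does not follow from $\mathbb{F}_0\le\varepsilon$, since $\mathbb{F}_1(0)$ may be tiny while $\mathbb{F}_2(0)$ is huge. This is essentially the mass-conservation route of \cite{WZ2017}, which needed large $\kappa$ or small mass.

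The paper instead adds $\Theta\int_0^T\|\divv u\|_{L^\infty}\,\mathrm{d}t\le(\text{a small power of }\mathbb{F}_0)$ to the bootstrap \eqref{5.1}. It then uses $\int P\divv u\,\mathrm{d}x\le{\rm c_V}\|\rho\theta\|_{L^1}\,\Theta\|\divv u\|_{L^\infty}$, which gives \eqref{5.2} at the $\mathbb{F}_1(0)$ level. The same hypothesis gives $\|\rho\|_{L^p}\le K\bar\rho$ for $p\in[\frac32,\infty]$ directly from the transport equation. Your observation that, since $P\ge0$, only $F$ matters for the density bound is correct but saves nothing. You must control $\int_0^T\|P\|_{L^\infty}\,\mathrm{d}t$ anyway for the dissipation. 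The paper does this via $\|\theta\|_{L^\infty}\lesssim\|\nabla\theta\|_{L^2}^{1/2}\|\nabla^2\theta\|_{L^2}^{1/2}$ and the elliptic bound \eqref{5.20}.

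The second gap is in step (iii). Splitting at the fixed time $t=1$ is incompatible with the scaling $t\mapsto\eta^2 t$ underlying $\mathbb{F}_0$. On $[0,1]$, H\"older with $\int_0^1\|\sqrt\rho\dot u\|_{L^2}^2\,\mathrm{d}t\lesssim\mathbb{F}_2(0)$ and $\int_0^1 t\,\mu\|\nabla\dot u\|_{L^2}^2\,\mathrm{d}t\lesssim\mathbb{F}_2(0)$ only gives $\mu^{-1}\int_0^1\|F\|_{L^\infty}\,\mathrm{d}t\lesssim\mu^{-5/4}\bar\rho^{3/4}\mathbb{F}_2(0)^{1/2}$, which is not small. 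Exponential decay inside the bootstrap does not rescue the tail either. Its rate depends on $\mu,\kappa,{\rm c_V}$ and $\|\rho\|_{L^{3/2}}$, and $\|\sqrt\rho\dot u\|_{L^2}^2$ at $t=1$ is again only $\mathbb{F}_2(0)$-level.

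The idea you are missing is that each polynomial time weight lowers the level by one. The paper proves \eqref{5.4}--\eqref{5.6}: $t\mathbb{F}_2\lesssim\Theta^2\bar\rho\mathbb{F}_1(0)$, $t\mathbb{F}_3\lesssim\mathbb{F}_2(0)$ and $t^2\mathbb{F}_3\lesssim\Theta^2\bar\rho\mathbb{F}_1(0)$, where $\mathbb{F}_3=\Psi\kappa\|\nabla\theta\|_{L^2}^2+\|\sqrt\rho\dot u\|_{L^2}^2$, together with the matching weighted integrals of $\mu\|\nabla\dot u\|_{L^2}^2$ and ${\rm c_V}\|\sqrt\rho\dot\theta\|_{L^2}^2$. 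It then splits at the data-dependent time $\Lambda=\min\{\mu^{-1}\mathbb{F}_1(0)^2,T\}$. The factors $(\int_0^\Lambda t^{-1/2}\,\mathrm{d}t)^{1/2}\sim\Lambda^{1/4}$ and $(\int_\Lambda^\infty t^{-5/4}\,\mathrm{d}t)^{1/2}\sim\Lambda^{-1/8}$ supply exactly the powers of $\mathbb{F}_1(0)$ needed. They turn the bounds on $\mu^{-1}\Theta\int_0^T\|(F,P)\|_{L^\infty}\,\mathrm{d}t$ into powers of $\mathbb{F}_1(0)\mathbb{F}_2(0)$ with explicit weights, hence into powers of $\mathbb{F}_0$. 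No decay is used inside the bootstrap. The decay \eqref{2.15} is derived afterwards, where parameter-dependent constants are allowed.
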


\begin{Remark}
It should be noted that our smallness condition \eqref{2.11} is independent of any initial data and known parameters in \eqref{1.1}, which is in sharp contrast to \cite{L21,HL2018,WZ2017,Li2020,W2025,LXZ2022} where they established global strong solutions under some smallness condition depending on the initial data and/or parameters. Moreover, there is no need to require the compatibility conditions on the initial data.
\end{Remark}

\begin{Remark}
Compared with \cite{WZ2017}, where Wen and Zhu established the global well-posedness of strong solutions to the problem \eqref{1.1}, \eqref{2.8}, and \eqref{2.9} provided that either the shear viscosity coefficient $\mu$ and the heat-conductivity coefficient $\kappa$ are sufficiently large ($\kappa \sim \mu^{r_1}$ with $\frac{3}{2} < r_1 < 5$) or the initial mass is properly small, the present work removes the large assumption on $\kappa$.
\end{Remark}

The proof of Theorem \ref{thm2.2} relies primarily on global {\it a priori} estimates (see Proposition \ref{pro5.1}) and the blow-up criterion \eqref{3.3}.
It should be pointed that the methodologies developed in \cite{LXZ2022,L21,WZ2017,HL2018} cannot be straightforwardly adopted to our analysis, since their smallness constraints are imposed on the known parameters in \eqref{1.1} and initial data, making these approaches inconsistent with the scaling-invariant structure.
Moreover, the analyses in \cite{Li2020} and \cite{WZ2017} strongly depend on the condition $2 \mu > \lambda$ and the large-value assumption on $\mu$ and $\kappa$, respectively. Thus, we shall develop novel analytical observations and new techniques
to establish coefficient-independent {\it a priori} estimates under the scaling-invariant assumption \eqref{2.11}.

On the one hand, inspired by \cite{Li2020,W2025}, we derive a crucial dissipation estimate $\|\nabla u\|_{L^2_t L^2_x}^2$, which is required to be bounded by the initial data $\mathbb{F}_0$ in a suitable sense.
With the aid of the {\it a priori hypothesis} (\ref{5.1}), we obtain this desired estimate. Our subsequent task is to establish a bound for the $L^{\infty}_t L^2_x$-norm of $\na u$, which requires estimating the $L^2_t L^2_x$-norm of $\na \theta$.
By virtue of the delicate energy estimates and the smallness condition imposed on $\mathbb{F}_0$, we reach the targeted conclusion.
On the other hand, it is rather challenging to close the bootstrap argument.
Based on the estimates derived from Proposition \ref{pro5.1}, we establish the uniform bound for the density and achieve the desired control of $\mathbb{F} (t)$ provided that the quantity $\mathbb{F}_0$ is sufficiently small.
Specifically, to construct a complete scaling-invariant quantity for the {\it a priori} estimates and the bootstrap argument, we multiply (\ref{5.11}) by $\mu^{-1} \Theta^3 \left(1 + \kappa \right) \bar{\rho}$. In this procedure, it is quite challenging to prove that the crucial norm $\Theta \|\divv u\|_{L^1_t L^{\infty}_x} $ can be controlled by the initial data $\mathbb{F}_0$. To verify that $\Theta \|\divv u\|_{L^1_t L^{\infty}_x} $ is small in a suitable sense, we first adopt the effective viscous flux to estimate $\mu^{-1} \Theta \|P\|_{L^1_t L^{\infty}_x} $ and $\mu^{-1} \Theta \|F\|_{L^1_t L^{\infty}_x}$ as a substitute for the direct estimate. We then establish several time-weighted estimates for $\mathbb{F}_2 (t)$, $\mathbb{F}_3 (t)$, which are bounded by the initial data $\mathbb{F}_1 (0)$ and $\mathbb{F}_2 (0)$, respectively. Finally, by virtue of a novel piecewise estimation strategy for solutions, we successfully obtain the expected results.

The rest of the paper is organized as follows. In the next section, we recall some known facts and elementary inequalities that will be used later. Section \ref{sec4} is devoted to showing Theorem \ref{thm2.1}, while the proof of Theorem \ref{thm2.2} is carried out in the last section.

\section{Preliminaries}\label{sec3}

In this section, we recall some elementary inequalities and known facts.
We begin with the local well-posedness of strong solutions for the isentropic compressible Navier--Stokes equations in \cite{Hu2020} and a Serrin-type blow-up criterion in \cite{HLX11}.
\begin{Lemma}\label{lem3.1}
Assume that \eqref{2.3} holds. Then there exists a small time $T>0$ such that the  problem \eqref{1.2}, \eqref{2.1}, and \eqref{2.2} admit a unique strong solution $(\rho,u)$ satisfying \eqref{2.5} and \eqref{2.6} on $\mathbb{R}^3 \times (0, T]$.
Furthermore, if $T$ is the maximal time of existence for local solutions, then either $T=\infty$ or
\begin{align}\label{3.1}
\lim _{T_1 \rightarrow T}\left(\|\rho\|_{L^\infty(0, T_1;L^{\infty})}
+\|\sqrt{\rho}u\|_{L^s(0, T_1; L^r)}\right) = \infty,
\end{align}
with $r$ and $s$ satisfying
\begin{align}\label{3.2}
\frac{2}{s}+\frac{3}{r} \leq 1, \ \ s>1,\ \ 3<r \leq \infty.
\end{align}
\end{Lemma}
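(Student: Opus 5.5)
This lemma is quoted from the literature, so the plan is to assemble it from two known results. Local existence with vacuum and no compatibility condition comes from \cite{Hu2020}. The Serrin-type blow-up criterion comes from \cite{HLX11}.

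\emph{Local existence.} First, I check that the hypothesis \eqref{2.3} fits the framework of \cite{Hu2020}. That framework requires $\rho_0\in H^1\cap W^{1,q}$ with $q\in(3,6)$ and $u_0\in D^1$, with no compatibility condition. Its proof runs as follows.
\begin{itemize}
\item Mollify the data and take $\rho_0^\delta=\rho_0*j_\delta+\delta e^{-|x|^2}$, so the density is strictly positive. The classical local theory for non-vacuum data then gives a smooth solution.
\item Derive $\delta$-independent bounds on a short interval $[0,T]$. These control $\|\rho\|_{H^1\cap W^{1,q}}$, $\|\nabla u\|_{L^2}$, and $\int_0^T\|\sqrt\rho\dot u\|_{L^2}^2\,{\rm d}t$.
\item Obtain the time-weighted bounds on $\sqrt t\sqrt\rho\dot u$ and $t\nabla\dot u$ by multiplying the momentum equation by $t\dot u$ and applying $\dot u_j[\partial_t+\divv(u\,\cdot)]$. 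The weight $t$ is what removes the compatibility condition.
\item Pass to the limit $\delta\to0$ and prove uniqueness by an $L^2$ energy estimate on the difference of two solutions.
\end{itemize}
The density bound in \eqref{2.5} follows for $T$ small by continuity. Indeed, from $\rho_t+u\cdot\nabla\rho=-\rho\divv u$ we get $\|\rho(t)\|_{L^\infty}\le\|\rho_0\|_{L^\infty}\exp(\int_0^t\|\divv u\|_{L^\infty})$. Shrinking $T$ keeps this below $\tfrac32\bar\rho$. The $L^\gamma$ bound on $\rho$ follows from the energy inequality.

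\emph{Blow-up criterion.} I argue by contradiction, following \cite{HLX11}. Suppose $T<\infty$ but the quantity in \eqref{3.1} stays finite. The goal is to show that every norm in \eqref{2.6} stays bounded up to $T$; then Lemma \ref{lem3.1}'s local result, applied at $T-\eta$, extends the solution past $T$. The steps are:
\begin{itemize}
\item Use the bounds on $\|\rho\|_{L^\infty}$ and $\|\sqrt\rho u\|_{L^s_tL^r_x}$, with $\frac2s+\frac3r\le1$, to close the $L^\infty_tL^2_x$ estimate for $\nabla u$. Multiply the momentum equation by $u_t$. The convective term $\int\rho|u|^2|\nabla u|^2$ is controlled by H\"older and Gagliardo--Nirenberg, then Gronwall.
\item Use the effective viscous flux $F$ and the vorticity $\omega$, which solve elliptic equations $\Delta F=\divv(\rho\dot u)$ and $\mu\Delta\omega=\nabla\times(\rho\dot u)$, to bound $\nabla u$ in $L^p$.
\item Estimate $\|\nabla u\|_{L^\infty}$ through a Beale--Kato--Majda-type logarithmic inequality. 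Combined with the transport equation for $\nabla\rho$, this gives a Gronwall bound on $\|\nabla\rho\|_{L^2\cap L^q}$.
\end{itemize}

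\emph{Main obstacle.} The delicate step is the $\nabla u$ estimate under the Serrin condition with vacuum present. Because $\sqrt\rho$ may vanish, one cannot divide by $\rho$, and the integrability of $u$ itself must come from $D^1\hookrightarrow L^6$. I would handle the convective term by the bound
$$\int\rho|u|^2|\nabla u|^2\le C\|\sqrt\rho u\|_{L^r}^2\|\nabla u\|_{L^{2r/(r-2)}}^2.$$
I would then interpolate $\|\nabla u\|_{L^{2r/(r-2)}}$ between $L^2$ and $L^6$, and bound the $L^6$ part by $\|\rho\dot u\|_{L^2}+\|P\|_{L^6}$. The exponent condition \eqref{3.2} is exactly what lets Young's inequality absorb $\|\sqrt\rho\dot u\|_{L^2}^2$ and leave a Gronwall factor $\|\sqrt\rho u\|_{L^r}^s$. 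Since the lemma is taken directly from \cite{Hu2020,HLX11}, I would cite these results rather than redo them in full.
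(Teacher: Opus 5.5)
Your proposal matches the paper, which gives no proof of Lemma~\ref{lem3.1} and simply quotes the local well-posedness from \cite{Hu2020} and the Serrin-type criterion from \cite{HLX11}, exactly as you propose to do. One small caveat on your optional sketch: $\rho_0*j_\delta+\delta e^{-|x|^2}$ is positive but still tends to zero at infinity, so it does not place the approximate problem in a non-vacuum local theory (which needs the density bounded below by a positive constant); this does not affect the citation-based argument.
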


Next, we recall the local existence and uniqueness of strong solutions for the non-isentropic compressible Navier--Stokes equations in \cite{LZ2023} and a Serrin-type blow-up criterion in \cite{HLW13}.
\begin{Lemma}\label{lem3.2}
Assume that \eqref{2.10} holds. Then there exists a small time $T>0$ such that the problem \eqref{1.1}, \eqref{2.8}, and \eqref{2.9} admit a unique strong solution $(\rho,u,\theta)$ satisfying \eqref{2.12} and \eqref{2.13} on $\mathbb{R}^3\times (0, T]$. Furthermore, if $T$ is the maximal time of existence for local solutions, then either $T=\infty$ or
\begin{align}\label{3.3}
\lim _{T_1 \rightarrow T}\left(\|\divv u\|_{L^1(0, T_1;L^{\infty})}
+\|u\|_{L^s(0, T_1; L^r)}\right) = \infty,
\end{align}
with $r$ and $s$ satisfying \eqref{3.2}.
\end{Lemma}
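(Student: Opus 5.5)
This lemma is not something I would prove from scratch. The paper presents it as a combination of two results quoted from the literature: local well-posedness from \cite{LZ2023} and the Serrin-type blow-up criterion from \cite{HLW13}. The plan is therefore to verify that the hypotheses of those works are met under \eqref{2.10}, and that the regularity classes \eqref{2.12}--\eqref{2.13} are what they deliver on a short interval $(0,T]$.

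\textbf{Step 1 (local existence).} I would invoke the construction of \cite{LZ2023}, which works without compatibility conditions and uses time-weighted norms.
\begin{itemize}
\item Regularize the data: take $\rho_0^\delta=\rho_0+\delta$ mollified, together with smoothed $u_0$ and $\theta_0$.
\item Apply the classical non-vacuum local theory to the regularized problem.
\item Derive estimates uniform in $\delta$. These are the basic energy estimate, $L^\infty_tL^2_x$ bounds on $\nabla u$, and the time-weighted bounds on $\sqrt{t\rho}u_t$, $t\sqrt\rho\theta_t$ and $t\nabla^2\theta$.
\item Close these bounds on a short time $T$ that depends only on the norms in \eqref{2.10}.
\end{itemize}
The weights $t$ and $\sqrt t$ appearing in \eqref{2.13} are exactly what replaces the compatibility conditions.

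\textbf{Step 2 (density bounds and passage to the limit).} Transport estimates give $\rho\in L^\infty(0,T;H^1\cap W^{1,q})$ and the bound $\rho\le \frac32\bar\rho$ for small $T$. These use $u\in L^1(0,T;D^{2,q})$, which follows from Lamé elliptic regularity and the time-weighted bounds. I would then pass $\delta\to0$ by compactness. Uniqueness follows from a weighted $L^2$ energy comparison in Lagrangian-type estimates, as in \cite{LZ2023}. Nonnegativity of $\theta$ comes from the minimum principle for the temperature equation.

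\textbf{Step 3 (blow-up criterion).} Suppose the quantity in \eqref{3.3} stays finite. Following \cite{HLW13}, I would argue in this order:
\begin{itemize}
\item Finiteness of $\|\divv u\|_{L^1L^\infty}$ bounds $\|\rho\|_{L^\infty}$ along trajectories.
\item Serrin's condition on $u$ gives the $L^\infty_tL^2_x$ bound on $\nabla u$, via the effective viscous flux $F$ and the vorticity $\omega$.
\item This bound then yields the $\nabla\theta$ estimates.
\item A Beale--Kato--Majda-type logarithmic inequality controls $\|\nabla u\|_{L^1L^\infty}$, and with it $\|\nabla\rho\|_{L^2\cap L^q}$.
\end{itemize}
All norms in \eqref{2.10} then stay bounded up to $T$, so Step 1 restarts at a time close to $T$, contradicting maximality.

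\textbf{Main obstacle.} The hardest step is adapting the criterion of \cite{HLW13}, which was proved for smoother data, to the present weaker, time-weighted class. One needs the continuation argument to use only the norms in \eqref{2.10}, which are themselves propagated by Step 1.
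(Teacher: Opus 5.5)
Your proposal takes the same route as the paper, which gives no argument of its own for this lemma and simply quotes local well-posedness without compatibility conditions from \cite{LZ2023} and the Serrin-type criterion from \cite{HLW13}. Your remark that a finite $\|\divv u\|_{L^1(0,T;L^\infty)}$ bounds $\|\rho\|_{L^\infty}$ along particle paths is exactly what reduces \eqref{3.3} to the density-plus-Serrin criterion of \cite{HLW13}. The obstacle you flag is usually removed by starting the continuation argument at a small $t_0>0$: there the time-weighted bounds in \eqref{2.13} give $u(t_0)\in D^2$, $\nabla^2\theta(t_0)\in L^2$ and $\sqrt{\rho}\dot u(t_0),\sqrt{\rho}\dot\theta(t_0)\in L^2$, which are the compatibility conditions needed by the smooth-data theory behind \cite{HLW13}.
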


Next, it deduces from (\ref{1.1})$_2$ that the effective viscous flux $F$ and the vorticity $\omega$ satisfy
\begin{equation}\label{3.4}
\Delta F = \divv (\rho \dot u),\ \  \mu \Delta \omega = \nabla \times(\rho \dot u).
\end{equation}
Applying the standard $L^p$-estimate to the system \eqref{3.4} leads to
\begin{Lemma}\label{lem3.3}(\cite{HLX2012})
Let $(\rho, u)$/$(\rho, u, \theta )$ be a strong solution to the problem \eqref{1.2}, \eqref{2.1}, and \eqref{2.2}/\eqref{1.1}, \eqref{2.8}, and \eqref{2.9}. Then, for any $p \in [2,6]$, there exists a positive constant $C=C(p)$ such that
\begin{gather}\label{3.5}
\|(\nabla F,\mu \nabla \omega )\|_{L^p} \leq C \|\rho \dot u\|_{L^p},
\\ \label{3.6}
\|(F, \mu \omega )\|_{L^p} \leq C \|\rho \dot u\|_{L^2}^{\frac{3p - 6}{2p}} \|(\mu \nabla u, (\mu + \lambda ) \divv u, P)\|_{L^2}^{\frac{6 - p}{2p}},
\\ \label{3.7}
\|\nabla u\|_{L^p}
\leq C \mu^{\frac{6 - 3p}{2p}} \|\nabla u\|_{L^2}^{\frac{6 - p}{2p}} \|\rho \dot{u}\|_{L^2}^{\frac{3 p -6}{2p}} + C \left( 2 \mu + \lambda \right)^{-1} \left( \|P\|_{L^p} + \|P\|_{L^2}^{\frac{6 - p}{2p}} \|\rho \dot{u}\|_{L^2}^{\frac{3 p -6}{2p}} \right).
\end{gather}
\end{Lemma}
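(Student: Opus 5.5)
The plan is to derive all three bounds from the elliptic system \eqref{3.4} combined with Sobolev and Gagliardo--Nirenberg interpolation, tracking the viscosity constants throughout.

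\textbf{Bound \eqref{3.5}.} Since $\Delta F=\divv(\rho\dot u)$ on $\mathbb{R}^3$, I would write $\nabla F=\nabla\Delta^{-1}\divv(\rho\dot u)$. This is a composition of Riesz transforms, so the Calder\'on--Zygmund theorem gives $\|\nabla F\|_{L^p}\le C(p)\|\rho\dot u\|_{L^p}$ for every $1<p<\infty$. In the same way, $\mu\nabla\omega=\nabla\Delta^{-1}\nabla\times(\rho\dot u)$ gives $\|\mu\nabla\omega\|_{L^p}\le C\|\rho\dot u\|_{L^p}$. The identities \eqref{3.4} hold in the distributional sense. Here $F$ and $\omega$ decay at infinity, because $\nabla u,P\in L^2$. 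Hence no harmonic part can be added, and the Riesz representation is justified.

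\textbf{Bound \eqref{3.6}.} I would apply the Gagliardo--Nirenberg inequality
\[
\|f\|_{L^p}\le C\|f\|_{L^2}^{\frac{6-p}{2p}}\|\nabla f\|_{L^2}^{\frac{3p-6}{2p}},\qquad p\in[2,6],
\]
to $f=F$ and to $f=\mu\omega$. The gradient factor is controlled by \eqref{3.5} with $p=2$. The $L^2$ factor is bounded by the definitions: $\|F\|_{L^2}\le\|(2\mu+\lambda)\divv u\|_{L^2}+\|P\|_{L^2}$, and $\|\mu\omega\|_{L^2}\le C\mu\|\nabla u\|_{L^2}$. Writing $2\mu+\lambda=\mu+(\mu+\lambda)$ then gives exactly the norm $\|(\mu\nabla u,(\mu+\lambda)\divv u,P)\|_{L^2}$.

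\textbf{Bound \eqref{3.7}.} This step needs care with the constants. I would use the div--curl estimate
\[
\|\nabla u\|_{L^p}\le C\big(\|\divv u\|_{L^p}+\|\omega\|_{L^p}\big),
\]
which again follows from Riesz transforms, together with the decomposition $\divv u=(2\mu+\lambda)^{-1}(F+P)$.

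For the vorticity, I would interpolate $\omega$ itself rather than $\mu\omega$:
\[
\|\omega\|_{L^p}\le C\|\omega\|_{L^2}^{\frac{6-p}{2p}}\|\nabla\omega\|_{L^2}^{\frac{3p-6}{2p}}\le C\|\nabla u\|_{L^2}^{\frac{6-p}{2p}}\big(\mu^{-1}\|\rho\dot u\|_{L^2}\big)^{\frac{3p-6}{2p}}.
\]
This produces the power $\mu^{\frac{6-3p}{2p}}$ in the first term of \eqref{3.7}.

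For the flux part, I would bound $\|F\|_{L^2}\le(2\mu+\lambda)\|\divv u\|_{L^2}+\|P\|_{L^2}$. Then
\[
(2\mu+\lambda)^{-1}\|F\|_{L^p}\le C(2\mu+\lambda)^{-1}\big((2\mu+\lambda)\|\nabla u\|_{L^2}+\|P\|_{L^2}\big)^{\frac{6-p}{2p}}\|\rho\dot u\|_{L^2}^{\frac{3p-6}{2p}}.
\]
I would split this by subadditivity of $s\mapsto s^{\theta}$ for $\theta=\frac{6-p}{2p}\le1$. The $P$-piece is exactly the last term of \eqref{3.7}.

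The $\nabla u$-piece carries the factor $(2\mu+\lambda)^{\frac{6-p}{2p}-1}=(2\mu+\lambda)^{\frac{6-3p}{2p}}$, and this is the main obstacle. The exponent is nonpositive, so the comparison with $\mu^{\frac{6-3p}{2p}}$ requires $2\mu+\lambda\ge c\mu$ with $c>0$. When $\lambda\ge0$ this is immediate. In general the constraint $3\lambda+2\mu\ge0$ gives only $2\mu+\lambda\ge\frac43\mu$, which still suffices. The resulting constant is absolute, so the first term of \eqref{3.7} absorbs this piece. Finally, the term $(2\mu+\lambda)^{-1}\|P\|_{L^p}$ completes the estimate.
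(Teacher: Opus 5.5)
Your proposal is correct and follows the route the paper intends: the paper gives no separate proof, simply invoking the standard elliptic $L^p$ (Calder\'on--Zygmund) estimates for \eqref{3.4} with Gagliardo--Nirenberg interpolation, as in \cite{HLX2012}. Your explicit check that $(2\mu+\lambda)^{\frac{6-3p}{2p}}\le\mu^{\frac{6-3p}{2p}}$, using $2\mu+\lambda\ge\frac43\mu$, is exactly the coefficient bookkeeping the paper leaves implicit in obtaining \eqref{3.7} with a constant depending only on $p$.
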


Next, the following Gagliardo--Nirenberg inequality (see \cite[Theorem 12.87]{book17}) will be used.
\begin{Lemma}\label{lem3.4}
 If $0\leq m,j\leq l$, then it has
$$
\|\nabla^{j}f\|_{L^p} \leq C\|\nabla^m f\|_{L^q}^{1-\theta} \|\nabla^l f\|_{L^r}^\theta,
$$
where $0\leq \theta\leq 1$ $($when $p=\infty$, it requires $0<\theta<1)$   and $j$ satisfies
$$
\frac{j}{3} - \frac{1}{p} = \left( \frac{m}{3} - \frac{1}{q} \right) (1-\theta) + \left(\frac{l}{3} - \frac{1}{r} \right) \theta.
$$
\end{Lemma}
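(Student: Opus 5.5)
The statement just given is the Gagliardo--Nirenberg interpolation inequality, Lemma \ref{lem3.4}, quoted from \cite[Theorem 12.87]{book17}; the paper uses it as a black box. If I were to prove it, I would reduce to the classical Sobolev inequality on $\mathbb{R}^3$ by scaling and interpolation.

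\emph{Step 1 (scaling).} First I check that the relation between $j,m,l,p,q,r,\theta$ is exactly the one forced by dilation invariance. Replacing $f$ by $f(\lambda\,\cdot)$ multiplies $\|\nabla^{j}f\|_{L^p}$ by $\lambda^{j-3/p}$. The right-hand side is multiplied by $\lambda^{(m-3/q)(1-\theta)+(l-3/r)\theta}$. The stated identity says these two powers agree, so it is necessary, and it will also be used to combine the endpoint estimates below.

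\emph{Step 2 (endpoints).} For $\theta=1$ the claim is $\|\nabla^j f\|_{L^p}\le C\|\nabla^l f\|_{L^r}$ with $\frac{j}{3}-\frac1p=\frac l3-\frac1r$. This is the iterated Sobolev embedding $\dot W^{l-j,r}\hookrightarrow L^p$. I would prove it from the potential representation $|g|\le C\,I_{k}|\nabla^k g|$ together with the Hardy--Littlewood--Sobolev inequality, when $p<\infty$. The case $\theta=0$ is symmetric.

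\emph{Step 3 (interior $\theta$).} I would proceed by induction on $l-m$. The basic case is the Nirenberg estimate $\|\nabla f\|_{L^{p}}\le C\|f\|_{L^{q}}^{1/2}\|\nabla^2 f\|_{L^{r}}^{1/2}$ with $\frac2p=\frac1q+\frac1r$. It follows from integrating by parts in $\int |\partial_i f|^{p}$ and applying H\"older's inequality. I would then chain such inequalities across the intermediate orders and combine them with Step 2 via H\"older interpolation of Lebesgue norms. This gives the estimate for general $j$ and $\theta$; the scaling relation of Step 1 guarantees that the exponents match.

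\emph{Main obstacle.} The delicate points are the cases $p=\infty$ and $r$ corresponding to a critical Sobolev exponent. There the Sobolev embedding fails, which is why the statement requires $0<\theta<1$ when $p=\infty$. For this case I would split $f$ with a Littlewood--Paley decomposition at a frequency $N$. I would bound the low frequencies by the $\nabla^m f\in L^q$ norm via Bernstein's inequality, and the high frequencies by the $\nabla^l f\in L^r$ norm. Optimizing in $N$ then produces the geometric mean, with strict $0<\theta<1$ ensuring that both dyadic sums converge.
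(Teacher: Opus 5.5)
The paper gives no proof of this lemma; it simply quotes \cite[Theorem 12.87]{book17}. Your outline (Sobolev endpoints, Nirenberg's second-order estimate, induction, dyadic splitting at $p=\infty$) is the standard route to Gagliardo--Nirenberg. The gap is that you try to prove the statement exactly as transcribed, and as transcribed it is false. Your proposal asserts the false parts instead of catching them.

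\textbf{The missing restriction on $\theta$.} Dilation invariance is necessary but not sufficient. Take $f_N(x)=\phi(x)\sin(Nx_1)$ with $\phi\in C_c^\infty(\mathbb{R}^3)$. Then $\|\nabla^k f_N\|_{L^s}\sim N^k$ for every $s\in[1,\infty]$, so the inequality forces $N^{j}\lesssim N^{(1-\theta)m+\theta l}$, i.e.
\[
j\le (1-\theta)m+\theta l \quad\Longleftrightarrow\quad \frac1p\le\frac{1-\theta}{q}+\frac{\theta}{r}.
\]
The equivalence comes from the dilation relation. For $l>m$ this says $\theta\ge\frac{j-m}{l-m}$; when $m=0$ it is the classical restriction $\theta\ge j/l$, which the transcription drops. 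A concrete counterexample: $m=0$, $j=l=1$, $\theta=0$, $q=6$, $p=2$ satisfies the stated relation, yet $\|\nabla f\|_{L^2}\le C\|f\|_{L^6}$ is absurd. Consequently:
\begin{itemize}
\item Your Step 2 remark that ``the case $\theta=0$ is symmetric'' is wrong whenever $j>m$; it is then not a Sobolev embedding at all.
\item Your Step 3 claim that chaining Nirenberg's estimate with H\"older ``gives the estimate for general $j$ and $\theta$'', because scaling matches the exponents, cannot hold. That procedure only ever produces $\theta\ge\frac{j-m}{l-m}$.
\end{itemize}

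\textbf{A similar gap in the $p=\infty$ step.} Set $A\triangleq (l-m)-3\big(\frac1r-\frac1q\big)$ and let $Q_k$ be the dyadic block at frequency $2^k$. Bernstein's inequality and the dilation relation with $p=\infty$ give
\[
\|Q_k\nabla^j f\|_{L^\infty}\lesssim 2^{k\theta A}\|\nabla^m f\|_{L^q},\qquad \|Q_k\nabla^j f\|_{L^\infty}\lesssim 2^{-k(1-\theta)A}\|\nabla^l f\|_{L^r}.
\]
So $0<\theta<1$ alone does not make both dyadic sums converge. You also need $A\neq0$, and if $A<0$ the low and high frequencies must be assigned the opposite way from your description. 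When $A=0$ the inequality genuinely fails. For example, $j=m=0$, $l=1$, $q=p=\infty$, $r=3$ meets every stated hypothesis. It reduces to $\|g\|_{L^\infty}\le C^{1/\theta}\|\nabla g\|_{L^3}$, which truncations of $\log\log(1/|x|)$ refute.

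All these homogeneous estimates also need a decay condition at infinity to exclude constants and polynomials. With $j\le(1-\theta)m+\theta l$ restored, $A\neq0$ imposed when $p=\infty$, and decay assumed, your strategy is the standard one. Every instance the paper actually uses lies in that admissible range, e.g. $\|F\|_{L^\infty}\le C\|F\|_{L^6}^{1/2}\|\nabla F\|_{L^6}^{1/2}$ and $\|\theta\|_{L^\infty}\le C\|\nabla\theta\|_{L^2}^{1/2}\|\nabla^2\theta\|_{L^2}^{1/2}$.
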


Finally, the following Zlotnik's inequality (see \cite[Lemma 1.3]{Z2000}) play a crucial role to achieve the uniform upper bound of the density.

\begin{Lemma}\label{lem3.5}
Suppose that the function $y$ satisfies
$$
y'(t) = g (y) + b'(t) \ \ on \ \ [0,T], \ \ y(0) = y^0,
$$
with $g \in C (\mathbb{R})$ and $y,b \in W^{1,1} (0,T)$. If $g(\infty ) = - \infty $ and
\begin{equation}\label{3.8}
b (t_2) - b(t_1) \leq N_0 + N_1 (t_2 - t_1),
\end{equation}
for all $0 \leq t_1 < t_2 \leq T$ with some $N_0 \geq 0$ and $N_1 \geq 0$, then $$
y(t) \leq \max \left\{y^0, \alpha_0 \right\} + N_0 < \infty\ \ on \ \ [0,T],
$$
where $\alpha_0$ is a constant such that
\begin{equation}\label{3.9}
g (\alpha) \leq - N_1 \ \ for \ \ \alpha \geq \alpha_0.
\end{equation}
\end{Lemma}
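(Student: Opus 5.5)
The plan is a direct first-exit (continuity) argument; no earlier result of the paper is needed. Since $y,b\in W^{1,1}(0,T)$, both are absolutely continuous, so the equation integrates to
$$
y(t)-y(s)=\int_s^t g(y(\tau))\,\mathrm{d}\tau + b(t)-b(s),\qquad 0\le s<t\le T .
$$
Because $g\in C(\mathbb{R})$ and $g(\infty)=-\infty$, a constant $\alpha_0$ satisfying \eqref{3.9} exists. Set $M\triangleq\max\{y^0,\alpha_0\}$. The goal is to show $y(t)\le M+N_0$ for every $t\in[0,T]$.

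Fix $t\in(0,T]$. If $y(t)\le M$ there is nothing to prove, so assume $y(t)>M$. Define $\tau\triangleq\sup\{s\in[0,t]:\ y(s)\le M\}$. This set is nonempty because $y(0)=y^0\le M$. By continuity of $y$ we get $y(\tau)=M$ and $\tau<t$, and also $y(s)>M\ge\alpha_0$ for all $s\in(\tau,t]$. On that interval \eqref{3.9} therefore gives $g(y(s))\le -N_1$.

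Insert this bound and the hypothesis \eqref{3.8} with $t_1=\tau$, $t_2=t$ into the integrated identity:
$$
y(t)\le y(\tau)-N_1(t-\tau)+N_0+N_1(t-\tau)=M+N_0 .
$$
This is the claimed bound. Finiteness is immediate, since $M$ and $N_0$ are finite.

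The only delicate point is the choice of the last exit time $\tau$ from $\{y\le M\}$. The estimate must be applied only on an interval where $y$ stays above $\alpha_0$, so that the drift $-N_1(t-\tau)$ exactly cancels the linear growth allowed for $b$ in \eqref{3.8}. This leaves only the constant $N_0$. Everything else is the fundamental theorem of calculus for absolutely continuous functions.
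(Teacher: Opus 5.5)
Your last-exit-time argument is correct and complete: closedness of $\{s\in[0,t]:\ y(s)\le M\}$ and continuity of $y$ give $y(\tau)=M$ and $y>M\ge\alpha_0$ on $(\tau,t]$, so the drift $-N_1(t-\tau)$ cancels the linear growth allowed by \eqref{3.8} and only $N_0$ remains. The paper gives no proof of its own and simply cites Zlotnik \cite[Lemma 1.3]{Z2000}, for which this is the standard argument; note also that your proof never uses that $g$ is autonomous, only that $g(y(s))\le -N_1$ whenever $y(s)\ge\alpha_0$, so it also covers the $s$-dependent $g$ (through the $\delta$-term) to which the lemma is actually applied in Step III of Proposition \ref{pro4.1}.
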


\section{Proof of Theorem \ref{thm2.1}}\label{sec4}

This section aims to proving Theorem \ref{thm2.1}. To this end, we first establish some global {\it a priori} estimates under smallness condition on the coefficient-coupled scaling invariant initial quantity, and then extend the local solution to be the global one.
Let $(\rho,u)$ be the strong solution of \eqref{1.2}, \eqref{2.1}, and \eqref{2.2} on $\mathbb{R}^3 \times (0,T]$ and set
\begin{gather*}
\mathbb{E}_1 (t)
\triangleq \int \left(\frac{1}{2} \rho |u|^2 + \frac{a}{\gamma - 1} \rho^{\gamma} \right) \mathrm{d} x,
\\
\mathbb{E}_2 (t)
\triangleq \int \big[\mu |\nabla u|^2 + (\mu + \lambda )(\divv u)^2 + \left(2 \mu + \lambda \right)^{-1}P^2 \big] \mathrm{d} x,
\\
\mathbb{E} (t) \triangleq \mu^{-5} \alpha^{6} \bar{\rho}^{3} \mathbb{E}_1 (t) \mathbb{E}_2 (t).
\end{gather*}
Here and hereafter, we denote by $K, \varepsilon, K_i$, and $\varepsilon_i$ ($i=1, 2, \ldots$) the various positive constants, which are independent of the known parameters in \eqref{1.2}, the initial data, and $T$. Sometimes we write $K(\xi)$ to emphasize the dependence on $\xi$.

\begin{Proposition}\label{pro4.1}
Let the conditions in Theorem \ref{thm2.1} be satisfied.
There exists a positive constant $\varepsilon$ such that if
\begin{equation}\label{4.1}
\sup_{0 \leq t \leq T}\|\rho \|_{L^\infty } \leq 2 \bar \rho \ \ and \ \ \sup_{0 \leq t \leq T} \mathbb{E} (t) \leq \mathbb{E}_0^{\frac{1}{2}},
\end{equation}
then
\begin{gather}\label{4.3}
\sup_{0 \leq t \leq T} \mathbb{E}_1 (t) + \int^T_0 \left[ \mu \|\nabla u\|_{L^2}^2 + (\mu + \lambda ) \|\divv u\|_{L^2}^2 \right] \mathrm{d} t \leq \mathbb{E}_1 (0),
\\ \label{4.4}
\sup_{0 \leq t \leq T} \mathbb{E}_2 (t) + \int^T_0 \left[\|\sqrt{\rho} \dot{u}\|_{L^2}^2 + (2 \mu + \lambda )^{-2} \|P\|_{L^3}^3 \right] \mathrm{d} t \leq K \mathbb{E}_2 (0),\\
\label{4.2}
\sup_{0 \leq t \leq T}\|\rho \|_{L^\infty } \leq \frac{3}{2} \bar \rho, \ \ \sup_{0 \leq t \leq T}\mathbb{E} (t) \leq \frac{1}{2} \mathbb{E}_0^{\frac{1}{2}},
\end{gather}
provided that $\mathbb{E}_0 \leq \varepsilon.$
\end{Proposition}
\begin{proof}
{\bf Step I. Proof of \eqref{4.3}.}
The desired \eqref{4.3} follows from the standard elementary energy estimate.

{\bf Step II. Proof of \eqref{4.4}.} Taking the inner product of \eqref{1.2}$_2$ with $u_t$ in $L^2$, one has that
\begin{align}\label{4.5}
& \frac{1}{2} \frac{\mathrm{d}}{\mathrm{d} t} \int \left[ \mu |\nabla u|^2 + (\mu + \lambda )(\divv u)^2 \right] \mathrm{d} x + \int \rho |\dot{u}|^2 \mathrm{d} x\nonumber\\
& \quad = \int P \divv u_t \mathrm{d} x + \int \rho u \cdot \nabla u \cdot \dot{u} \mathrm{d} x\nonumber\\
& \quad = \frac{\mathrm{d}}{\mathrm{d} t} \int P \divv u \mathrm{d} x - \frac{1}{2 (2 \mu + \lambda )} \frac{\mathrm{d}}{\mathrm{d} t} \int P^2 \mathrm{d} x + \int \rho u \cdot \nabla u \cdot \dot{u} \mathrm{d} x \notag\\
& \qquad - \frac{1}{2\mu + \lambda } \int P u \cdot \nabla F \mathrm{d} x + \frac{\gamma - 1}{ \left(2 \mu + \lambda \right)} \int P \divv u F \mathrm{d} x\nonumber\\
& \quad \triangleq \frac{\mathrm{d}}{\mathrm{d} t} \int P \divv u \mathrm{d} x - \frac{1}{2 (2 \mu + \lambda )} \frac{\mathrm{d}}{\mathrm{d} t} \int P^2 \mathrm{d} x + \sum^3_{i = 1} I_i.
\end{align}
It deduces from (\ref{3.5}) with $p = 2$, (\ref{4.1}), and (\ref{3.7}) with $p = 3$ that
\begin{align*}
I_1
& \leq K \bar{\rho}^{\frac{1}{2}} \|\sqrt{\rho} \dot{u}\|_{L^2} \|\nabla u\|_{L^3} \|u\|_{L^6} \\
& \leq \frac{1}{8} \|\sqrt{\rho} \dot{u}\|_{L^2}^2 + K \mu^{-1} \bar{\rho} \|\nabla u\|_{L^2}^3 \|\rho \dot u\|_{L^2} + K \left(2 \mu + \lambda \right)^{-2} \bar{\rho} \|\nabla u\|_{L^2}^2 \left(\|P\|_{L^3}^2 + \|\rho \dot u\|_{L^2} \|P\|_{L^2} \right)\\
& \leq K \mu^{-5} \bar{\rho}^3 \|(\mu^{\frac{1}{2}} \nabla u, \left(2\mu + \lambda \right)^{-\frac{1}{2}} P)\|_{L^2}^2 \|\mu^{\frac{1}{2}}\nabla u\|_{L^2}^4 + \frac{1}{4} \|\sqrt{\rho} \dot{u}\|_{L^2}^2 + \frac{1}{16 (2 \mu + \lambda )^2} \|P\|_{L^3}^3,\\
I_2 + I_3
& \leq K \left(2 \mu + \lambda \right)^{-1} \gamma \|\nabla F\|_{L^2} \|P\|_{L^3} \|\nabla u\|_{L^2} \\
& \leq \frac{1}{4} \|\sqrt{\rho} \dot{u}\|_{L^2}^2 + K \left(2 \mu + \lambda \right)^{-2} \gamma^2 \bar{\rho} \|\nabla u\|_{L^2}^2 \|P\|_{L^3}^2\\
& \leq \frac{1}{4} \|\sqrt{\rho} \dot{u}\|_{L^2}^2 + \frac{1}{16 (2 \mu + \lambda )^2} \|P\|_{L^3}^3 + K \mu^{-5} \alpha^6 \bar{\rho}^3 \|\mu^{\frac{1}{2}} \nabla u\|_{L^2}^6.
\end{align*}
Inserting the above estimates into (\ref{4.5}), we get that
\begin{align}\label{4.6}
& \frac{\mathrm{d}}{\mathrm{d} t} \int \big[ \mu |\nabla u|^2 + (\mu + \lambda )(\divv u)^2 + \left( 2 \mu + \lambda \right)^{-1}|P|^2 \big] \mathrm{d} x + \int \rho |\dot{u}|^2 \mathrm{d} x\notag \\
& \quad \leq 2 \frac{\mathrm{d}}{\mathrm{d} t} \int P \divv u \mathrm{d} x + \frac{\|P\|_{L^3}^3}{4 (2 \mu + \lambda )^2} + K \mu^{-5} \alpha^6 \bar{\rho}^3 \mathbb{E}_2 (t) \|\sqrt{\mu}\nabla u\|_{L^2}^4.
\end{align}

Next, for positive constant $K_1 > 1 $, one has that
$$
2 \int P \divv u \mathrm{d} x
\leq \frac{2\mu + \lambda }{4} \|\divv u\|_{L^2}^2 + K_1 \frac{ \|P\|_{L^2}^2}{2 \mu + \lambda } \leq \frac{1}{4} \mathbb{E}_2 (t) + K_1 \frac{ \|P\|_{L^2}^2}{2 \mu + \lambda }.
$$
Moreover, we rewrite (\ref{1.2})$_1$ as
\begin{equation}\label{4.7}
P_t + \gamma P \divv u + u \cdot \nabla P = 0.
\end{equation}
Multiplying (\ref{4.7}) by $2 K_1 P \left(2 \mu + \lambda \right)^{-1}$ in $L^2$ and using the fact
$\divv u = \frac{F + P }{2 \mu + \lambda }$ gives
\begin{align*}
& \frac{K_1}{2 \mu + \lambda } \frac{\mathrm{d}}{\mathrm{d} t} \int P^2 \mathrm{d} x + \frac{K_1 \left( 2 \gamma - 1\right)}{\left(2 \mu + \lambda \right)^2} \int P^3 \mathrm{d} x\notag\\
& \quad = - \frac{ K_1 \left(2 \gamma - 1 \right)}{\left(2 \mu + \lambda \right)^2} \int P^2 F \mathrm{d} x \leq \frac{K_1 \left(2 \gamma - 1 \right)}{\left(2 \mu + \lambda \right)^2} \|P\|_{L^3}^2 \|F\|_{L^3}\notag\\
& \quad \leq \frac{K_1 \left(2 \gamma - 1 \right)}{4 (2 \mu + \lambda )^2} \|P\|_{L^3}^3 + K \frac{\left(2 \gamma - 1 \right)}{(2 \mu + \lambda )^2} \|((2 \mu + \lambda )\divv u, P)\|_{L^2}^{\frac{3}{2}} \|\rho \dot u\|_{L^2}^{\frac{3}{2}}\notag\\
& \quad \leq \frac{K_1 \left(2 \gamma - 1 \right)}{4 (2 \mu + \lambda )^2} \|P\|_{L^3}^3 + \frac{1}{4} \|\sqrt{\rho} \dot{u}\|_{L^2}^2 + K \frac{\alpha^{4} \bar{\rho}^3}{\left( 2 \mu + \lambda \right)^8} \|((2 \mu + \lambda )\divv u, P)\|_{L^2}^6,
\end{align*}
which together with \eqref{4.6} indicates that
\begin{align*}
& \frac{\mathrm{d}}{\mathrm{d} t} \left[\mu \|\nabla u\|_{L^2}^2 + \left(\mu + \lambda \right)\|\divv u\|_{L^2}^2 + \frac{K_1 + 1}{2 \mu + \lambda } \|P\|_{L^2}^2 \right] + \frac{3}{4} \|\sqrt{\rho} \dot{u}\|_{L^2}^2 + \frac{K_1 \left( 2 \gamma - 1\right)}{2 \left(2 \mu + \lambda \right)^2} \|P\|_{L^3}^3\\
& \quad \leq 2 \frac{\mathrm{d}}{\mathrm{d} t} \int P \divv u \mathrm{d} x + K \left( 2 \mu + \lambda \right)^{-3} \mu^{-5} \alpha^4 \bar{\rho}^3 \|P\|_{L^2}^6 + K \mu^{-5} \alpha^6 \bar{\rho}^3 \mathbb{E}_2 (t) \|(\sqrt{\mu}\nabla u, \sqrt{\mu + \lambda} \divv u)\|_{L^2}^4.
\end{align*}
Integrating the above inequality over $(0,T)$ with respect to $t$ leads to
\begin{align*}
& \sup_{0 \leq t \leq T} \mathbb{E}_2 (t) + \frac{3}{4}\int^T_0 \|\sqrt{\rho} \dot{u}\|_{L^2}^2 \mathrm{d} t + \frac{K_1 \left( 2 \gamma - 1\right)}{2 \left(2 \mu + \lambda \right)^2} \int^T_0 \|P\|_{L^3}^3 \mathrm{d} t\\
& \quad \leq K \mathbb{E}_2 (0) + \frac{1}{4} \sup_{0 \leq t \leq T} \mathbb{E}_2 (t) + K \left( 2 \mu + \lambda \right)^{-3} \mu^{-5} \alpha^4 \bar{\rho}^3 \int^T_0 \|P\|_{L^2}^6 \mathrm{d} t\\
& \qquad + K \mu^{-5} \alpha^6 \bar{\rho}^3 \int^T_0 \mathbb{E}_2 (t) \|(\sqrt{\mu}\nabla u, \sqrt{\mu + \lambda} \divv u)\|_{L^2}^4 \mathrm{d} t\\
& \quad \leq K \mathbb{E}_2 (0) + \frac{1}{2} \sup_{0 \leq t \leq T} \mathbb{E}_2 (t) + \frac{1}{4 \left(2 \mu + \lambda \right)^2} \int^T_0 \|P\|_{L^3}^3 \mathrm{d} t,
\end{align*}
where one has used
\begin{align*}
& K \left( 2 \mu + \lambda \right)^{-3} \mu^{-5} \alpha^4 \bar{\rho}^3 \int^T_0 \|P\|_{L^2}^6 \mathrm{d} t + K \mu^{-5} \alpha^6 \bar{\rho}^3 \int^T_0 \mathbb{E}_2 (t) \|(\sqrt{\mu}\nabla u, \sqrt{\mu + \lambda} \divv u)\|_{L^2}^4 \mathrm{d} t\\
& \quad \leq K \left( 2 \mu + \lambda \right)^{-3} \mu^{-5} \alpha^4 \bar{\rho}^3 \int^T_0 \|P\|_{L^2}^2 \|P\|_{L^1} \|P\|_{L^3}^3 \mathrm{d} t + K \mu^{-5} \alpha^6 \bar{\rho}^3 \sup_{0 \leq t \leq T}\big(\mathbb{E}_1 (t) \mathbb{E}_2(t)^2\big)\\
& \quad  \leq K \mu^{-5} \alpha^5 \bar{\rho}^3 \sup_{0 \leq t \leq T}\big(\mathbb{E}_1(t) \mathbb{E}_2(t)\big)\frac{1}{(2 \mu + \lambda )^2} \int^T_0 \|P\|_{L^3}^3 \mathrm{d} t + K \mathbb{E}_0^{\frac{1}{2}} \sup_{0 \leq t \leq T} \mathbb{E}_2 (t)\\
&  \quad \leq \frac{K_2 \mathbb{E}_0^{\frac{1}{2}}}{(2 \mu + \lambda )^2} \int^T_0 \|P\|_{L^3}^3 \mathrm{d} t + K_3 \mathbb{E}_0^{\frac{1}{2}} \sup_{0 \leq t \leq T} \mathbb{E}_2 (t) \leq \frac{1}{4} \sup_{0 \leq t \leq T} \mathbb{E}_2 (t) + \frac{1}{4 (2 \mu + \lambda )^2} \int^T_0 \|P\|_{L^3}^3 \mathrm{d} t,
\end{align*}
provided that
$$
\mathbb{E}_0 \leq \varepsilon_1 \triangleq \min \left\{1, (4 K_2)^{-2}, (4 K_3)^{-2}\right\}.
$$

{\bf Step III. Proof of (\ref{4.2}).}
For any given $(x,t) \in \mathbb{R}^3 \times [0,T]$ and $\delta>0$, let
$$
\rho^{\delta} (y,s) = \rho (y,s) + \delta \exp\left\{- \int^s_0 \divv u \left( X(\tau; x,t), \tau \right)\mathrm{d} \tau \right\} > 0.
$$
Here $X(s;x,t)$ is given by
$$
\begin{cases}
\frac{\mathrm{d}}{\mathrm{d} t} X (s;x,t) = u \left(X (s;x,t),s \right), \ \ 0 \leq s < t,\\
X(t;x,t) = x.
\end{cases}
$$
Thus, \eqref{1.2}$_1$ yields that
$$
\frac{\mathrm{d}}{\mathrm{d} s} \rho^{\delta} \left(X (s;x,t),s \right) + \rho^{\delta} \left(X (s;x,t),s \right) \divv u \left(X (s;x,t),s \right) = 0.
$$
This leads to
$$
Y'(s) = g(s) + b'(s),
$$
where
$$
Y (s) = \ln \rho^{\delta} \left(X (s;x,t),s \right), \ \ g(s) = - \frac{a \rho^{\gamma} \left(X (s;x,t),s \right)}{2 \mu + \lambda },
$$
$$
b(s) = - \frac{1}{2\mu + \lambda } \int^s_0 F \left(X (\tau;x,t),\tau \right) \mathrm{d} \tau.
$$

For the effective viscous flux $F$, one deduces from (\ref{3.4})$_1$ that
$$
F \left(X (t;x,\tau),\tau \right) = - \frac{\mathrm{d}}{\mathrm{d}
\tau} \left[(- \Delta)^{-1} \divv (\rho u) \right] + [u_i,\mathcal{R}_{ij} ] (\rho u_j)
$$
with $[u_i, \mathcal{R}_{ij}] = u_i \mathcal{R}_{ij} - \mathcal{R}_{ij} u_i$ and $\mathcal{R}_{ij} = \partial_i \left(- \Delta\right)^{-1} \partial_j$, which gives
\begin{align}
b(t_2) - b(t_1)
& = \frac{1}{2 \mu + \lambda } \left[(- \Delta)^{-1} \mathrm{div} (\rho u) (t_2) - (- \Delta)^{-1} \divv (\rho u) (t_1) \right] - \frac{1}{2 \mu + \lambda } \int^{t_2}_{t_1} [u_i,\mathcal{R}_{ij} ] (\rho u_j) \mathrm{d} \tau \notag\\
& \leq \frac{2}{2 \mu + \lambda } \sup_{0 \leq t \leq T} \| (- \Delta)^{-1} \divv (\rho u)\|_{L^{\infty }} - \frac{1}{2 \mu + \lambda } \int^{t_2}_{t_1} [u_i,\mathcal{R}_{ij} ] (\rho u_j) \mathrm{d} \tau \triangleq J_1 + J_2.\label{4.8}
\end{align}
Now we estimate $J_1$ and $J_2$ term by term
\begin{align}\label{4.9}
J_1
& \leq K \mu^{-1} \|(-\Delta)^{-1} \divv (\rho u)\|_{L^6}^{\frac{1}{2}} \|\nabla (-\Delta)^{-1} \divv (\rho u)\|_{L^6}^{\frac{1}{2}} \leq K \mu^{-1} \|\rho u\|_{L^2}^{\frac{1}{2}} \|\rho u\|_{L^6}^{\frac{1}{2}}\notag \\
& \leq K \mu^{- \frac{5}{4}} \bar{\rho}^{\frac{3}{4}} \|\sqrt \rho u\|_{L^2}^{\frac{1}{2}} \left( \mu \|\nabla u\|_{L^2}^2 \right)^{\frac{1}{4}} \leq K \mathbb{E}_0^{\frac{1}{8}}.
\end{align}
Before estimating $J_2$, we deduce from (\ref{3.7}) with $p = 6$ that
$$
\|\nabla u\|_{L^6}
\leq K \mu^{-1} \bar{\rho}^{\frac{1}{2}} \|\sqrt{\rho} \dot{u}\|_{L^2} + K \mu^{-1} a \|\rho^{\gamma}\|_{L^6},
$$
which combined with  the commutator estimates and Lemma \ref{lem3.4} leads to
$$
\begin{aligned}
\|[u_i,\mathcal{R}_{ij} ] (\rho u_j)\|_{L^\infty }
& \leq K \|[u_i,\mathcal{R}_{ij} ] (\rho u_j)\|_{L^3}^{\frac{1}{5}} \|\nabla [u_i,\mathcal{R}_{ij} ] (\rho u_j)\|_{L^4}^{\frac{4}{5}}\\
& \leq K \|u\|_{L^6}^{\frac{1}{5}} \|\rho u\|_{L^6}^{\frac{1}{5}} \|\nabla u\|_{L^6}^{\frac{4}{5}} \|\rho u\|_{L^{12}}^{\frac{4}{5}}\\
& \leq K \bar{\rho} \|u\|_{L^6}^{\frac{2}{5}} \|\nabla u\|_{L^6}^{\frac{4}{5}} \Big( \|u\|_{L^6}^{\frac{3}{4}} \|\nabla u\|_{L^6}^{\frac{1}{4}}\Big)^{\frac{4}{5}} \leq K \bar{\rho} \|\nabla u\|_{L^2} \|\nabla u\|_{L^6}\\
& \leq K \mu^{-1} \bar{\rho}^{\frac{3}{2}} \|\nabla u\|_{L^2} \|\sqrt{\rho} \dot{u}\|_{L^2} + K \mu^{-1} a \bar{\rho} \|\nabla u\|_{L^2} \|\rho^{\gamma}\|_{L^6}. \\
\end{aligned}
$$
This implies that
\begin{align*}
J_2
& \leq K \left(2 \mu + \lambda \right)^{-1} \mu^{-\frac{3}{2}} \bar{\rho} \left(\int^{t_2}_{t_1} \|\mu^{\frac{1}{2}}\nabla u\|_{L^2}^2 \mathrm{d} \tau \right)^{\frac{1}{2}} \left[\int^{t_2}_{t_1} \left(\bar{\rho} \|\sqrt{\rho} \dot{u}\|_{L^2}^2 + a^2 \|\rho^{\gamma}\|_{L^6}^2 \right)\mathrm{d} \tau \right]^{\frac{1}{2}}\notag\\
& \leq K \mathbb{E}_0^{\frac{1}{4}} + K \mu^{-\frac{3}{2}} \left(2 \mu + \lambda \right)^{-1} a \bar{\rho} \left( \int^{t_2}_{t_1} \|\mu^{\frac{1}{2}}\nabla u\|_{L^2}^2 \mathrm{d} \tau \right)^{\frac{1}{2}} \left(\int^{t_2}_{t_1} \|\rho^{\gamma}\|_{L^6}^2\mathrm{d} \tau \right)^{\frac{1}{2}}\notag\\
& \leq K \mathbb{E}_0^{\frac{1}{4}}  + K \mu^{-\frac{3}{2}} \left(2 \mu + \lambda \right)^{-1} a \bar{\rho}^{\frac{2 + \gamma}{2}} \sup_{0 \leq t \leq T} \mathbb{E}_2(t)^{\frac{1}{6}} \left[\int^{t_2}_{t_1} \left(\|\mu^{\frac{1}{2}}\nabla u\|_{L^2}^2 \right)^{\frac{2}{3}} \mathrm{d} \tau \right]^{\frac{1}{2}} \left(\int^{t_2}_{t_1} \|\rho^{\gamma}\|_{L^3}\mathrm{d} \tau \right)^{\frac{1}{2}}\notag\\
& \leq K \mathbb{E}_0^{\frac{1}{4}}  + K \mu^{-\frac{3}{2}} \left(2 \mu + \lambda \right)^{-\frac{2}{3}} a^{\frac{1}{2}} \bar{\rho}^{\frac{2 + \gamma}{2}}  \left( t_2 - t_1 \right)^{\frac{1}{2}} \sup_{0 \leq t \leq T} \mathbb{E}_2(t)^{\frac{1}{6}} \left(\int^{t_2}_{t_1} \|\mu^{\frac{1}{2}}\nabla u\|_{L^2}^2 \mathrm{d} \tau \right)^{\frac{1}{3}}\notag \\
& \quad \times\left[\frac{1}{(2 \mu + \lambda )^2} \int^{t_2}_{t_1} \|P\|_{L^3}^3 \mathrm{d} \tau \right]^{\frac{1}{6}} \leq K \mathbb{E}_0^{\frac{1}{4}} + K \mathbb{E}_0^{\frac{1}{3}} + \frac{a \bar{\rho}^{\gamma}}{2 \mu + \lambda } \left( t_2 - t_1 \right),
\end{align*}
which together with \eqref{4.8} and \eqref{4.9} shows that
$$
b(t_2) - b(t_1)
\leq K\mathbb{E}_0^{\frac{1}{8}} \Big(1 + \mathbb{E}_0^{\frac{7}{24}} \Big) + \frac{a \bar{\rho}^{\gamma}}{2 \mu + \lambda } \left( t_2 - t_1 \right) \leq K \mathbb{E}_0^{\frac{1}{8}} + \frac{a \bar{\rho}^{\gamma}}{2 \mu + \lambda } \left( t_2 - t_1 \right) \leq N_0 + N_1 \left( t_2 - t_1 \right).
$$
Here
$$
N_0 = K \mathbb{E}_0^{\frac{1}{8}}, \ \ N_1 = \frac{a \bar{\rho}^{\gamma}}{2 \mu + \lambda }.
$$

Recalling that
$$
g = g (Y) \triangleq - \left(e^{Y} - \delta \exp\left\{- \int^s_0 \divv u \left(X(\tau;x,t), \tau \right) \mathrm{d} \tau \right\} \right)^{\gamma} \left( 2 \mu + \lambda \right)^{-1},
$$
which implies that $g(\infty ) = - \infty $. Assume that
$$
\bar{Y}_{\delta} \triangleq \ln \left\{ \bar{\rho} + \delta \exp\left\{\int^T_0 \|\divv u(\cdot,\tau)\|_{L^{\infty }} \mathrm{d} \tau \right\} \right\}.
$$
Let $Y \geq \bar{Y}_{\delta}$, then
$$
\begin{aligned}
g(Y)
& = - \left(e^{Y} - \delta \exp\left\{- \int^s_0 \divv u \left(X(\tau;x,t), \tau \right) \mathrm{d} \tau \right\} \right)^{\gamma} \left( 2 \mu + \lambda \right)^{-1}\\
& \leq - \left(e^{Y} - \delta \exp\left\{\int^T_0 \|\mathrm{div} u (\cdot,\tau)\|_{L^{\infty }} \mathrm{d} \tau \right\} \right)^{\gamma} \left( 2 \mu + \lambda \right)^{-1} \leq - N_1.
\end{aligned}
$$
Consequently, one gets from Lemma \ref{lem3.5} that
$$
\rho^{\delta} (x, s) \leq \max\left\{ \bar{\rho} + \delta, \exp\{\bar{Y}_{\delta} \}\right\} \exp\{N_0\}.
$$
Letting $\delta \rightarrow 0$, one has
$$
\rho(x,s) \leq \bar{\rho} \exp\{N_0\} \leq \bar{\rho} \exp\left\{K_4 \mathbb{E}_0^{\frac{1}{8}}\right\} \leq \frac{3}{2} \bar{\rho},
$$
provided that
$$
\mathbb{E}_0 \leq \varepsilon_2 \triangleq \min \left\{\varepsilon_1,  \left(\frac{ \ln \frac{3}{2}}{K_4} \right)^8\right\}.
$$

Finally, it follows from (\ref{4.3}) and (\ref{4.4}) that
$$
\mathbb{E} (t) \leq K_5 \mathbb{E}_0 \leq \frac{1}{2} \mathbb{E}_0^{\frac{1}{2}},
$$
provided that
$$
\mathbb{E}_0 \leq \varepsilon \triangleq \min \left\{\varepsilon_2, (2 K_5)^{-2} \right\}.
$$
The proof of Proposition \ref{pro4.1} is complete.
\end{proof}

With the global {\it a priori} estimates in Proposition \ref{pro4.1} at hand, we can give the proof of Theorem \ref{thm2.1}.

{\bf Proof of Theorem \ref{thm2.1}}. The global existence of solutions can be obtained from Proposition \ref{pro4.1} and blow-up criterion \eqref{3.1}.
Indeed, taking $s=4$ and $r=6$ in \eqref{3.2} and using \eqref{4.3}--\eqref{4.2}, there exists a positive constant $C$ independent of $T$ such that
$$
\sup_{0\leq t\leq T} \|\rho\|_{L^\infty} + \int_{0}^T \|\sqrt{\rho}u\|_{L^6}^4 {\rm d} t \le \sup_{0\leq t\leq T} \|\rho\|_{L^\infty} + C \sup_{0\leq t\leq T} \|\rho\|_{L^\infty}^{\frac12} \sup_{0\leq t\leq T} \|\nabla u\|_{L^2}^{2}
\int_{0}^T \|\nabla u\|_{L^2}^{2}{\rm d}t \le C,
$$
which combined with \eqref{3.1} implies that $T=\infty$. Moreover, the proof of decay rate  \eqref{2.7} is similar to that in \cite{LX2019}, and we omit the details for simplicity.
This completes the proof of Theorem \ref{thm2.1}.
\hfill$\square$

\section{Proof of Theorem \ref{thm2.2}}\label{sec5}

In this section we give the proof of Theorem \ref{thm2.2} with the aid of global {\it a priori} estimates and Serrin-type blow-up criterion.
To this end, we set
\begin{gather}
\mathbb{F}_1 (t) \triangleq {\rm c_V} \|\rho \theta \|_{L^1} + \|\sqrt \rho u\|_{L^2}^2,\notag
\\
\mathbb{F}_2 (t)
\triangleq \mu \|\nabla u\|_{L^2}^2 + (\mu + \lambda ) \|\divv u\|_{L^2}^2 + \Psi {\rm c_V} \|\sqrt \rho \theta \|_{L^2}^2,\notag
\\
\mathbb{F}_3 (t)
\triangleq \Psi \kappa \|\nabla \theta\|_{L^2}^2 + \|\sqrt{\rho} \dot{u}\|_{L^2}^2,\notag
\\
\mathbb{F} (t) \triangleq \mu^{-\frac{3}{2}} \Theta^{\frac{81}{2}} \bar{\rho}^{13} \mathbb{F}_1(t) \mathbb{F}_2 (t).\notag
\end{gather}
From now on, let $(\rho,u,\theta )$ be the strong solution of \eqref{1.1}, \eqref{2.8}, and \eqref{2.9} on $\mathbb{R}^3 \times (0,T]$. Then we have the following global {\it a priori} estimates.
\begin{Proposition}\label{pro5.1}
Let the conditions in Theorem \ref{thm2.2} be satisfied.
There exists a positive constant $\varepsilon$ such that if
\begin{equation}\label{5.1}
\sup_{0 \leq t \leq T} \mathbb{F} (t) \leq \mathbb{F}_0^{\frac{1}{2}} \ \ and \ \ \Theta \int^T_0 \| \divv u\|_{L^{\infty}} \mathrm{d} t \leq \mathbb{F}_1 (0)^{\frac{1}{32}},
\end{equation}
then
\begin{gather}\label{5.2}
\sup_{0 \leq t \leq T} \|\sqrt{\rho} u\|_{L^2}^2 + \int^T_0 \left[ \mu \|\nabla u\|_{L^2}^2 + (\mu + \lambda ) \|\divv u\|_{L^2}^2 \right] \mathrm{d} t \leq K \mathbb{F}_1 (0),
\\ \label{5.3}
\sup_{0 \leq t \leq T} \mathbb{F}_2 (t) + \int^T_0 \left(\|\sqrt{\rho} \dot{u}\|_{L^2}^2 + \Psi \kappa \|\nabla \theta \|_{L^2}^2 \right) \mathrm{d} t \leq K \mathbb{F}_2 (0),
\\ \label{5.4}
\sup_{0 \leq t \leq T}\big(t \mathbb{F}_2 (t)\big)+ \int^T_0 t \left(\|\sqrt{\rho} \dot{u}\|_{L^2}^2 + \Psi \kappa \|\nabla \theta \|_{L^2}^2 \right) \mathrm{d} t \leq K \Theta^2 \bar{\rho} \mathbb{F}_1 (0),
\\ \label{5.5}
\sup_{0 \leq t \leq T}\big(t \mathbb{F}_3 (t)\big)+ \int^T_0 t \left({\rm c_V} \|\sqrt{\rho} \dot{\theta}\|_{L^2}^2 + \mu \|\nabla \dot{u} \|_{L^2}^2 \right) \mathrm{d} t \leq K \mathbb{F}_2 (0),
\\ \label{5.6}
\sup_{0 \leq t \leq T}\big(t^2 \mathbb{F}_3 (t)\big)+ \int^T_0 t^2 \left({\rm c_V} \|\sqrt{\rho} \dot{\theta}\|_{L^2}^2 + \mu \|\nabla \dot{u} \|_{L^2}^2 \right) \mathrm{d} t \leq K \Theta^2 \bar{\rho} \mathbb{F}_1 (0),\\ \label{5.7}
\sup_{0 \leq t \leq T} \mathbb{F} (t) \leq \frac{1}{2} \mathbb{F}_0^{\frac{1}{2}},\ \ \Theta \int^T_0 \|\divv u\|_{L^{\infty}} \mathrm{d} t \leq \frac{1}{2} \mathbb{F}_1 (0)^{\frac{1}{32}},
\end{gather}
provided that $\mathbb{F}_0 \triangleq \mathbb{F}(0) \leq \varepsilon.$
\end{Proposition}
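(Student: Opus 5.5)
We argue by bootstrap under the hypothesis \eqref{5.1}, proving \eqref{5.2}--\eqref{5.7} in order, each time exhibiting the scaling-invariant combination $\mathbb{F}$ (or $\mathbb{F}_0$) in front of every nonlinear term so that it can be absorbed for $\mathbb{F}_0\le\varepsilon$.

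\textbf{Step 1: \eqref{5.2}.} Conservation of total energy gives $\sup_t\mathbb{F}_1(t)\le \mathbb{F}_1(0)$. Since the energy identity provides no dissipation by itself, we take the $L^2$ inner product of the momentum equation with $u$:
\[
\frac12\frac{d}{dt}\|\sqrt\rho u\|_{L^2}^2+\mu\|\nabla u\|_{L^2}^2+(\mu+\lambda)\|\divv u\|_{L^2}^2={\rm R}\int\rho\theta\,\divv u .
\]
We bound the right-hand side by $\|\rho\|_{L^{3/2}\cap L^\infty}$-type norms times $\|\sqrt\rho\theta\|_{L^2}$ or $\|\theta\|_{L^6}$. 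The density bound comes from the transport equation and \eqref{5.1}$_2$: $\|\rho\|_{L^\infty}\le\|\rho_0\|_{L^\infty}\exp(\int_0^T\|\divv u\|_{L^\infty})\le e\,\bar\rho$, and mass conservation controls $L^1$. Time integration then needs $\int_0^T\|\nabla\theta\|_{L^2}^2$. We therefore obtain \eqref{5.2} jointly with the temperature dissipation from \eqref{5.3}, after multiplying by the weight $\Psi$, which makes the coupling constant $\lesssim \mathbb{F}^{1/2}$.

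\textbf{Step 2: \eqref{5.3}--\eqref{5.6}.} For \eqref{5.3} we test the momentum equation by $\dot u$ and the temperature equation ${\rm c_V}\rho\dot\theta+P\divv u=\kappa\Delta\theta+2\mu|\mathfrak D(u)|^2+\lambda(\divv u)^2$ by $\theta$, and add the second multiplied by $\Psi$. The pressure term $\int P\divv u_t$ is moved into the time derivative and handled with $F$ and Lemma~\ref{lem3.3}. The cubic terms $\int\rho u\cdot\nabla u\cdot\dot u$ and $\Psi\int|\nabla u|^2\theta$ are estimated by \eqref{3.7} with $p=3$ or $4$ and Gagliardo--Nirenberg (Lemma~\ref{lem3.4}); each becomes $K\mathbb{F}(t)^{\beta}\mathbb{F}_3+$ (small)$\times$dissipation, which \eqref{5.1}$_1$ absorbs. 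For \eqref{5.4} we multiply the same differential inequality by $t$. The extra term $\mathbb{F}_2$ is integrated in time via \eqref{5.2} and $\Psi{\rm c_V}\int\|\sqrt\rho\theta\|^2\le \Psi{\rm c_V}\bar\rho\int\|\theta\|_{L^6}^2\|\rho\|_{L^{3/2}}$, which produces the factor $\Theta^2\bar\rho\mathbb{F}_1(0)$. For \eqref{5.5}--\eqref{5.6} we apply $\dot{}$ to the momentum equation (the Hoff-type estimate, testing with $\dot u$) and test the temperature equation with $\dot\theta$, weighted by $t$ and $t^2$. The lower-order terms are then bounded by \eqref{5.3} and \eqref{5.4}, respectively. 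These time weights are what remove any compatibility condition.

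\textbf{Step 3: closing \eqref{5.7}.} The first part follows from $\mathbb{F}(t)\le K\mathbb{F}_0\le\frac12\mathbb{F}_0^{1/2}$ by \eqref{5.2} and \eqref{5.3}. The main obstacle is the second part. We write $\divv u=(F+P)/(2\mu+\lambda)$ and bound $\mu^{-1}\Theta\int_0^T(\|F\|_{L^\infty}+\|P\|_{L^\infty})$ piecewise on $[0,1]$ and $[1,T]$:
\begin{itemize}
\item For $F$, we use $\|F\|_{L^\infty}\lesssim\|F\|_{L^6}^{1/2}\|\nabla F\|_{L^6}^{1/2}$ together with \eqref{3.5}, i.e. $\|\nabla F\|_{L^6}\lesssim\bar\rho\|\nabla\dot u\|_{L^2}$. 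On $[0,1]$, Hölder with weight $t^{-1/2}$ and \eqref{5.3}, \eqref{5.5} give a bound $\lesssim\mathbb{F}_2(0)^{\cdot}$. On $[1,T]$, weights $t^{-1}$ and \eqref{5.4}, \eqref{5.6} give a bound $\lesssim(\Theta^2\bar\rho\mathbb{F}_1(0))^{1/2}$.
\item For $P={\rm R}\rho\theta$, we use $\|\theta\|_{L^\infty}\lesssim\|\nabla\theta\|_{L^2}^{1/2}\|\nabla^2\theta\|_{L^2}^{1/2}$, with $\nabla^2\theta$ controlled by elliptic estimates through $\mathbb{F}_3$ and $\sqrt\rho\dot\theta$, and treat it the same way.
\end{itemize}
Combining the two ranges yields $\Theta\int_0^T\|\divv u\|_{L^\infty}\le K\mathbb{F}_0^{\beta}\mathbb{F}_1(0)^{1/16}$ for some $\beta>0$. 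The delicate bookkeeping is to match powers of $\Theta$, $\bar\rho$, $\mu$ so that exactly $\mathbb{F}_0$ appears; if exponents do not match we would trade $\mathbb{F}_1(0)^{1/32}$ against the factor $\Theta^{81/2}\bar\rho^{13}$. This gives $\le\frac12\mathbb{F}_1(0)^{1/32}$ for $\varepsilon$ small.
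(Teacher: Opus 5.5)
Your Step 2 (testing with $u_t$ or $\dot u$ and with $\theta$, the effective viscous flux, Hoff-type estimates with weights $t$ and $t^2$) follows the paper. The proposal fails, however, at the two places where the product structure of $\mathbb{F}_0$ has to be produced.

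\textbf{The dissipation estimate \eqref{5.2}.} If you route ${\rm R}\int\rho\theta\divv u\,\mathrm{d}x$ through $\|\theta\|_{L^6}$ or $\|\sqrt\rho\theta\|_{L^2}$, you pay roughly $\mu^{-1}{\rm R}^2\bar\rho^2\int_0^T\|\na\theta\|_{L^2}^2\,\mathrm{d}t$. By \eqref{5.3} this is only bounded by (parameters)$\times\mathbb{F}_2(0)$. Only the product $\mathbb{F}_1(0)\mathbb{F}_2(0)$ is small, so no weight $\Psi$ turns this into $K\mathbb{F}_1(0)$: take $\mathbb{F}_1(0)$ tiny and $\mathbb{F}_2(0)$ of order one. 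Solving the coupled system \eqref{5.2}--\eqref{5.3} still leaves an $\mathbb{F}_2(0)$ contribution in $\int_0^T\mu\|\na u\|_{L^2}^2\,\mathrm{d}t$.

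This is fatal for \eqref{5.3} itself. Its nonlinear terms are not of the form $\mathbb{F}^\beta\mathbb{F}_3$, as you suggest, but $\mathbb{F}_2\|\sqrt\mu\na u\|_{L^2}^4$. They are closed via $\int_0^T\mathbb{F}_2\|\sqrt\mu\na u\|_{L^2}^4\,\mathrm{d}t\le K\mathbb{F}_1(0)\sup_t\mathbb{F}_2^2$. With an $\mathbb{F}_2(0)$ term in place of $\mathbb{F}_1(0)$, this becomes cubic in $\mathbb{F}_2$ with no small factor.

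The missing idea is that the second hypothesis in \eqref{5.1} is used exactly here, not only for the density bound. The paper writes $\int P\divv u\,\mathrm{d}x\le\Theta\,{\rm c_V}\|\rho\theta\|_{L^1}\|\divv u\|_{L^\infty}$. By \eqref{5.9}, its time integral is then at most $K\mathbb{F}_1(0)\,\Theta\int_0^T\|\divv u\|_{L^\infty}\,\mathrm{d}t\le K\mathbb{F}_1(0)$, with no temperature dissipation at all.

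The same objection applies to your bound on $\Psi{\rm c_V}\int_0^T\|\sqrt\rho\theta\|_{L^2}^2\,\mathrm{d}t$ in \eqref{5.4}. Passing through $\bar\rho\int_0^T\|\na\theta\|_{L^2}^2\,\mathrm{d}t$ and \eqref{5.3} gives $K\Theta^2\bar\rho\,\mathbb{F}_2(0)$, not $K\Theta^2\bar\rho\,\mathbb{F}_1(0)$. The paper's Step III makes the identical move, so it supplies no justification for this step.

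\textbf{Closing \eqref{5.7}.} Splitting at $t=1$ cannot produce a power of $\mathbb{F}_0$, because $t=1$ is not a time scale of the problem. For the $F$-part your weights give (parameters)$\times\mathbb{F}_2(0)^{1/2}$ on $[0,1]$ and (parameters)$\times\mathbb{F}_1(0)^{1/2}$ on $[1,T]$. Either can be large while $\mathbb{F}_0$ is small.

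The paper splits at the data-dependent time $\Lambda=\min\{\mu^{-1}\mathbb{F}_1(0)^2,T\}$:
\begin{itemize}
\item On $[0,\Lambda]$, \eqref{5.3}, \eqref{5.5} and $\int_0^\Lambda t^{-1/2}\,\mathrm{d}t\lesssim\Lambda^{1/2}$ give $\mu^{-3/4}\Theta^{5/2}\bar\rho^{3/4}\mathbb{F}_1(0)^{1/2}\mathbb{F}_2(0)^{1/2}$.
\item On $[\Lambda,T]$, \eqref{5.4}--\eqref{5.6} and $\int_\Lambda^\infty t^{-5/4}\,\mathrm{d}t\lesssim\Lambda^{-1/4}$ give $\mu^{-3/8}\Theta^{13/4}\bar\rho^{9/8}\mathbb{F}_1(0)^{1/8}\mathbb{F}_2(0)^{1/8}$.
\end{itemize}
This choice of $\Lambda$ is what balances $\mathbb{F}_1(0)$ against $\mathbb{F}_2(0)$. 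After that, $\mu^{-1}\le\Theta$ and $\Theta,\bar\rho\ge1$ make both pieces powers of $\mathbb{F}_0$. The same split handles the remaining terms and yields $\Theta\int_0^T\|\divv u\|_{L^\infty}\,\mathrm{d}t\le K\mathbb{F}_0^{1/16}\le\frac12\mathbb{F}_0^{1/32}$.

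So the paper closes the bootstrap against $\mathbb{F}_0^{1/32}$, which is evidently what \eqref{5.1} and \eqref{5.7} intend. Your target $K\mathbb{F}_0^{\beta}\mathbb{F}_1(0)^{1/16}\le\frac12\mathbb{F}_1(0)^{1/32}$ fails for large $\mathbb{F}_1(0)$. No trade against $\Theta^{81/2}\bar\rho^{13}$ can fix it, since $\mathbb{F}_1(0)$ is not controlled by those factors. Reading the hypothesis as $\mathbb{F}_0^{1/32}\le1$ is also what makes your density bound $\|\rho\|_{L^\infty}\le e\bar\rho$ and the argument for \eqref{5.2} above legitimate.
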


\begin{proof}
{\bf Step I. Proof of (\ref{5.2}).}
From \eqref{1.1}$_1$, (\ref{5.1}), and Gronwall's inequality, one sees that
\begin{equation}\label{5.8}
\sup_{0 \leq t \leq T} \|\rho\|_{L^p}\leq \|\rho_0\|_{L^p} \exp \left\{\int^T_0 \|\divv u\|_{L^{\infty}} {\rm d} t \right\} \leq K \bar{\rho}, \ \ \forall \ \frac{3}{2} \leq p \leq \infty.
\end{equation}
Moreover, it follows from \eqref{1.1}$_3$ and the maximum principle (see, e.g., \cite{HL2018}) that
$$
\inf_{\mathbb{R}^3 \times [0,T]} \theta (x,t) \geq 0.
$$
This combined with (\ref{1.1})$_2$ and (\ref{1.1})$_3$ gives
\begin{equation}\label{5.9}
\sup_{0 \leq t \leq T} \left(\|\sqrt \rho u \|_{L^2}^2 + {\rm c_V} \|\rho \theta \|_{L^1} \right) \leq K \mathbb{F}_1 (0).
\end{equation}
Furthermore, taking the inner product of (\ref{1.1})$_2$ with $u$ in $L^2$, we find that
\begin{equation}\label{5.10}
\frac{1}{2} \frac{\mathrm{d}}{\mathrm{d} t} \|\sqrt\rho u\|_{L^2}^2 + \mu \|\nabla u\|_{L^2}^2 + (\mu + \lambda) \|\divv u\|_{L^2}^2 = \int P \divv u \mathrm{d} x \leq {\rm c_V} \|\rho \theta\|_{L^1} \Theta \|\divv u\|_{L^{\infty}}.
\end{equation}
Integrating (\ref{5.10}) with respect to $t$ and utilizing (\ref{1.1})$_3$ and (\ref{5.9}), one can easily get (\ref{5.2}).

{\bf Step II. Proof of \eqref{5.3}.}
Taking the inner product of (\ref{1.1})$_3$ with $2 \theta $ in $L^2$, we obtain that
\begin{align}\label{5.11}
& {\rm c_V} \frac{\mathrm{d}}{\mathrm{d} t} \|\sqrt\rho \theta \|_{L^2}^2 + 2 \kappa \|\nabla \theta \|_{L^2}^2\notag \\
& \quad = - 2 \int P \divv u \theta \mathrm{d} x + 4 \mu \int |D(u)|^2 \theta \mathrm{d} x + 2 \lambda \int (\divv u)^2 \theta \mathrm{d} x \notag \\
& \quad \leq K {\rm R} \|\nabla u\|_{L^2} \|\rho \theta \|_{L^3} \|\theta \|_{L^6} + K \mu \|\nabla u\|_{L^2} \|\nabla u\|_{L^3} \|\theta \|_{L^6} + K (\mu + \lambda ) \|\divv u\|_{L^2} \|\divv u\|_{L^3} \|\theta \|_{L^6} \notag \\
& \quad \triangleq H_1 + H_2 + H_3.
\end{align}
Now we estimate the terms $H_1,H_2$, and $H_3$ term by term.
Taking advantage of \eqref{5.8}, Lemma \ref{lem3.4}, and \eqref{3.7} with $p = 6$, one has that
\begin{equation}\label{5.12}
\|\nabla u\|_{L^6}
\leq K \mu^{-1} \big( \bar{\rho}^{\frac{1}{2}} \|\sqrt{\rho} \dot{u}\|_{L^2} + {\rm R} \bar{\rho} \|\nabla \theta \|_{L^2} \big),
\end{equation}
which combined with Lemma \ref{lem3.4} and Young's inequality leads to
\begin{align*}
H_1
& \leq \frac{\kappa}{12} \|\nabla \theta \|_{L^2}^2 + K {\rm R}^2 \kappa^{-1} \|\nabla u\|_{L^2}^2 \|\rho \theta \|_{L^2} \|\rho \theta \|_{L^6} \leq \frac{\kappa}{6} \|\nabla \theta \|_{L^2}^2 + K {\rm R}^4 \kappa^{-3} \bar{\rho}^3 \|\sqrt{\rho} \theta \|_{L^2}^2 \|\nabla u\|_{L^2}^4\\
& \leq \frac{\kappa}{6} \|\nabla \theta \|_{L^2}^2 + K \mu^{-2} \Theta^{10} \bar{\rho}^3 \left({\rm c_V} \|\sqrt{\rho} \theta \|_{L^2}^2 \right) \|\sqrt{\mu}\nabla u\|_{L^2}^4,\\
H_2
& \leq \frac{\kappa}{12} \|\nabla \theta \|_{L^2}^2 + K \mu^2 \kappa^{-1} \|\nabla u\|_{L^2}^3 \|\nabla u\|_{L^6}\\
& \leq \frac{\kappa}{6} \|\nabla \theta \|_{L^2}^2 + K \delta \Psi^{-1} \|\sqrt{\rho} \dot{u}\|_{L^2}^2 + K (\delta) \mu^2 \kappa^{-2} \Psi \bar{\rho} \|\nabla u\|_{L^2}^6 + K \mu^2 \kappa^{-3} {\rm R}^2 \bar{\rho}^2 \|\nabla u\|_{L^2}^6\\
& \leq \frac{\kappa}{6} \|\nabla \theta \|_{L^2}^2 + K \delta \Psi^{-1} \|\sqrt{\rho} \dot{u}\|_{L^2}^2 + K (\delta) \mu^{-2} \Theta^{10} \bar{\rho}^3 \Psi^{-1} \|\sqrt{\mu}\nabla u\|_{L^2}^6,\\
H_3
& \leq \frac{\kappa}{12} \|\nabla \theta \|_{L^2}^2 + K \left(\mu + \lambda \right)^2 \kappa^{-1} \|\divv u\|_{L^2}^3 \|\divv u\|_{L^6}\\
& \leq \frac{\kappa}{12} \|\nabla \theta \|_{L^2}^2 + K \frac{\mu + |\lambda| }{\mu} \mu \kappa^{-1} \|\na u\|_{L^2}^3 \left( \bar{\rho}^{\frac{1}{2}} \|\sqrt{\rho} \dot{u}\|_{L^2} + {\rm R} \bar{\rho} \|\nabla \theta \|_{L^2} \right)\\
& \leq \frac{\kappa}{6} \|\nabla \theta \|_{L^2}^2 + K \delta \Psi^{-1} \|\sqrt{\rho} \dot{u}\|_{L^2}^2 + K (\delta) \mu^2 \Theta^2 \kappa^{-2} \Psi \bar{\rho} \|\na u\|_{L^2}^6 + K \mu^2 \Theta^2 \kappa^{-3} {\rm R}^2 \bar{\rho}^2 \|\na u\|_{L^2}^6 \\
& \leq \frac{\kappa}{6} \|\nabla \theta \|_{L^2}^2 + K \delta \Psi^{-1} \|\sqrt{\rho} \dot{u}\|_{L^2}^2 +  K (\delta) \mu^{-2} \Theta^{12} \bar{\rho}^3 \Psi^{-1} \|\sqrt{\mu}\na u\|_{L^2}^6.
\end{align*}
Substituting the above estimates into \eqref{5.11}, one arrives at
\begin{equation}\label{5.13}
\Psi {\rm c_V} \frac{\mathrm{d}}{\mathrm{d} t} \|\sqrt{\rho} \theta \|_{L^2}^2 + \Psi \kappa \|\nabla \theta \|_{L^2}^2 \leq K (\delta) \mu^{-2} \Theta^{12} \bar{\rho}^3 \mathbb{F}_2 (t) \|\sqrt{\mu}\na u\|_{L^2}^4 + K \delta \|\sqrt{\rho} \dot{u}\|_{L^2}^2.
\end{equation}

Next, multiplying (\ref{1.1})$_2$ by $u_t$ in $L^2$, one has
\begin{equation}\label{5.14}
\frac{1}{2} \frac{\mathrm{d}}{\mathrm{d} t} \int \left[ \mu |\nabla u|^2 + (\mu + \lambda )(\divv u)^2 \right] \mathrm{d} x + \int \rho |\dot{u}|^2 \mathrm{d} x = \int P \divv u_t \mathrm{d} x + \int \rho u \cdot \nabla u \cdot \dot{u} \mathrm{d} x.
\end{equation}
Owing to $\divv u = \frac{F + P }{2 \mu + \lambda }$, we find that
\begin{align}
\int P \divv u_t \mathrm{d} x
& = \frac{\mathrm{d}}{\mathrm{d} t} \int P \divv u \mathrm{d} x - \int P_t \divv u \mathrm{d} x\notag\\
& = \frac{\mathrm{d}}{\mathrm{d} t} \int P \divv u \mathrm{d} x - \frac{1}{2 (2 \mu + \lambda )} \frac{\mathrm{d}}{\mathrm{d} t} \int P^2 \mathrm{d} x - \frac{1}{2\mu + \lambda } \int P_t F \mathrm{d} x.\label{5.15}
\end{align}
Note that (\ref{1.1})$_3$ indicates
\begin{equation}\label{5.16}
P_t = (\gamma - 1) \left(\kappa \Delta \theta + 2 \mu |D(u)|^2 + \lambda(\divv u)^2 - P\divv u\right) - \divv(P u).
\end{equation}
Putting (\ref{5.16}) into (\ref{5.15}), we have
\begin{align}
\int P \divv u_t \mathrm{d} x
& = \frac{\mathrm{d}}{\mathrm{d} t} \int P \divv u \mathrm{d} x - \frac{1}{2 (2 \mu + \lambda )} \frac{\mathrm{d}}{\mathrm{d} t} \int P^2 \mathrm{d} x - \frac{1}{2\mu + \lambda } \int \left[ \kappa (\gamma - 1) \nabla \theta - P u \right] \cdot \nabla F \mathrm{d} x\notag\\
& \quad + \frac{\gamma - 1}{2 \mu + \lambda } \int \left(2 \mu |D(u)|^2 + \lambda(\divv u)^2 - P \divv u \right) F \mathrm{d} x.\label{5.17}
\end{align}
Inserting \eqref{5.17} into \eqref{5.14} yields
\begin{align}
& \frac{\mathrm{d}}{\mathrm{d} t} \int \left[ \mu |\nabla u|^2 + (\mu + \lambda ) (\divv u)^2 + \frac{P^2}{2\mu + \lambda } - 2 P \divv u \right] \mathrm{d} x + 2 \int \rho |\dot{u}|^2 \mathrm{d} x\notag\\
& \quad = \frac{2\gamma - 2}{2 \mu + \lambda } \int \left[2 \mu |D(u)|^2 + \lambda(\divv u)^2 - P \divv u \right] F \mathrm{d} x - 2 \int \rho u \cdot \nabla u \cdot \dot{u} \mathrm{d} x\notag\\
& \qquad - \frac{2}{2\mu + \lambda } \int \left[ \kappa (\gamma - 1) \nabla \theta - P u \right] \cdot \nabla F \mathrm{d} x \triangleq I_1 + I_2 + I_3.\label{5.18}
\end{align}

It deduces from Lemma \ref{lem3.4}, \eqref{5.12}, H\"{o}lder's inequality, and Young's inequality that
\begin{align*}
& \frac{2\gamma - 2}{2 \mu + \lambda } \int \left[2 \mu |D(u)|^2 + \lambda(\divv u)^2 \right] F \mathrm{d} x \leq K \Theta^2 \|\nabla u\|_{L^2} \|\nabla u\|_{L^3} \|\nabla F\|_{L^2}\\
& \quad \leq \frac{1}{24} \|\sqrt \rho \dot{u}\|_{L^2}^2 + K \Theta^4 \bar{\rho} \|\nabla u\|_{L^2}^3 \|\nabla u\|_{L^6}\\
& \quad \leq \frac{1}{12} \|\sqrt \rho \dot{u}\|_{L^2}^2 + K \mu^{-1} \Theta^2 \bar{\rho} \|\nabla \theta \|_{L^2}^2 + K \left(\mu^{-2} \Theta^8 \bar{\rho}^3 + \mu^{-1} \Theta^{10} \bar{\rho}^3 \right) \|\nabla u\|_{L^2}^6\\
& \quad \leq \frac{1}{12} \|\sqrt \rho \dot{u}\|_{L^2}^2 + K \Psi \kappa \|\nabla \theta \|_{L^2}^2 + K \mu^{-4} \Theta^{10} \bar{\rho}^3 \mathbb{F}_2 (t) \|\sqrt{\mu}\nabla u\|_{L^2}^4,
\end{align*}
and
\begin{align*}
- 2 \frac{\gamma - 1}{2 \mu + \lambda } \int P\divv u F \mathrm{d} x
& \leq K \mu^{-1} \Theta^2 {\rm c_V} \|\nabla u\|_{L^2} \|\rho \theta \|_{L^3} \|\nabla F\|_{L^2}\\
& \leq \frac{1}{12} \|\sqrt \rho \dot{u}\|_{L^2}^2 + K \mu^{-2} \Theta^4 {\rm c_V}^2 \bar{\rho} \|\nabla u\|_{L^2}^2  \|\rho \theta \|_{L^2} \|\rho \theta \|_{L^6}\\
& \leq \frac{1}{12} \|\sqrt \rho \dot{u}\|_{L^2}^2 + K \mu^{-4} \Theta^6 \bar{\rho}^3 \mathbb{F}_2 (t) \|\sqrt{\mu}\nabla u\|_{L^2}^4 + K \Psi \kappa \|\nabla \theta \|_{L^2}^2.
\end{align*}
Thus, one gets that
$$
I_1 \leq \frac{1}{12} \|\sqrt \rho \dot{u}\|_{L^2}^2 + K \Psi \kappa \|\nabla \theta \|_{L^2}^2 + K \mu^{-4} \Theta^{10} \bar{\rho}^3 \mathbb{F}_2 (t) \|\mu^{\frac{1}{2}} \nabla u\|_{L^2}^4.
$$
For $I_2$ and $I_3$, it follows from Lemma \ref{lem3.4}, \eqref{5.8}, \eqref{5.12}, H\"{o}lder's inequality, and Young's inequality that
\begin{align*}
I_2
& \leq K \bar{\rho}^{\frac{1}{2}} \|\sqrt \rho \dot{u}\|_{L^2} \|\nabla u\|_{L^3} \|u\|_{L^6} \leq \frac{1}{6} \|\sqrt \rho \dot{u}\|_{L^2}^2 + K \Psi \kappa \|\nabla \theta \|_{L^2}^2 + K \mu^{-4} \Theta^2 \bar{\rho}^3 \mathbb{F}_2 (t) \|\sqrt{\mu}\nabla u\|_{L^2}^4,\\
I_3
& \leq K \mu^{-1} \|\nabla F\|_{L^2}  \left[ \kappa \left(\gamma - 1 \right)\|\nabla \theta \|_{L^2} + {\rm c_V} \left( \gamma - 1\right) \|\nabla u\|_{L^2} \|\rho \theta \|_{L^3} \right]\\
& \leq \frac{1}{6} \|\sqrt \rho \dot{u}\|_{L^2}^2 + K \mu^{-2} {\rm c_V}^2 \left( \gamma - 1\right)^2 \bar{\rho} \|\rho \theta \|_{L^2} \|\rho \theta \|_{L^6} \|\nabla u\|_{L^2}^2 + K \mu^{-2} \kappa^2 \left( \gamma - 1\right)^2 \bar{\rho} \|\nabla \theta \|_{L^2}^2\\
& \leq \frac{1}{6} \|\sqrt \rho \dot{u}\|_{L^2}^2 + K \Psi \kappa \|\nabla \theta \|_{L^2}^2 + K \mu^{-4} \Theta^2 \bar{\rho}^3 \mathbb{F}_2 (t) \|\sqrt{\mu}\nabla u\|_{L^2}^4.
\end{align*}
Substituting the above estimates on $I_i\ (i=1,2,3)$ into \eqref{5.18} and integrating the resultant over $(0,T)$, one deduces that, for a suitably large constant $K_1 \geq 2$,
\begin{align*}
& \sup_{0 \leq t \leq T} \left[\mu \|\nabla u\|_{L^2}^2 + \left(\mu + \lambda \right) \|\divv u\|_{L^2}^2 \right] + \int^T_0 \|\sqrt{\rho} \dot{u}\|_{L^2}^2 \mathrm{d} t\\
& \quad \leq K \mathbb{F}_2 (0) + K \bar{\rho}^{\frac{1}{2}} {\rm R} \sup_{0 \leq t \leq T}\big(\|\nabla u\|_{L^2} \|\sqrt{\rho} \theta \|_{L^2}\big)+ K \Psi \kappa \int^T_0 \|\nabla \theta \|_{L^2}^2 \mathrm{d} t \\
& \qquad + K \mu^{-4} \Theta^{10} \bar{\rho}^3 \int^T_0 \mathbb{F}_2 (t) \|\sqrt{\mu}\nabla u\|_{L^2}^4 \mathrm{d} t\\
& \quad \leq K \mathbb{F}_2 (0) + \frac{K_1}{2} \sup_{0 \leq t \leq T} \left( \Psi {\rm c_V} \|\sqrt{\rho} \theta \|_{L^2}^2 \right) + \frac{K_1}{2} \Psi \kappa \int^T_0 \|\nabla \theta \|_{L^2}^2 \mathrm{d} t\\
& \qquad + K \mu^{-4} \Theta^{10} \bar{\rho}^3 \int^T_0 \mathbb{F}_2 (t) \|\sqrt{\mu}\nabla u\|_{L^2}^4 \mathrm{d} t+ \frac{\mu}{2} \sup_{0 \leq t \leq T}\|\nabla u\|_{L^2}^2,
\end{align*}
which combined with \eqref{5.13} multiplied by $K_1$ and choosing $\delta \leq (2 K_2)^{-1}$ leads to
\begin{align*}
& \sup_{0 \leq t \leq T} \mathbb{F}_2 (t) + \int^T_0 \|\sqrt{\rho} \dot{u}\|_{L^2}^2 \mathrm{d} t + \Psi \kappa \int^T_0 \|\nabla \theta \|_{L^2}^2 \mathrm{d} t\\
& \quad \leq K \mathbb{F}_2 (0) +  K_2 \delta \int^T_0 \|\sqrt{\rho} \dot{u}\|_{L^2}^2 \mathrm{d} t + K \mu^{-2} \Theta^{12} \bar{\rho}^3 \int^T_0 \mathbb{F}_2 (t) \|\sqrt{\mu}\nabla u\|_{L^2}^4 \mathrm{d} t\\
& \quad \leq K \mathbb{F}_2 (0) + \frac{1}{2} \int^T_0 \|\sqrt{\rho} \dot{u}\|_{L^2}^2 \mathrm{d} t + K \mu^{-2} \Theta^{12} \bar{\rho}^3 \sup_{0 \leq t \leq T}\big(\mathbb{F}_1 (t) \mathbb{F}_2 (t)^2\big)\\
& \quad \leq K \mathbb{F}_2 (0) + \frac{1}{2} \int^T_0 \|\sqrt{\rho} \dot{u}\|_{L^2}^2 \mathrm{d} t + K_3 \mathbb{F}_0^{\frac{1}{2}} \sup_{0 \leq t \leq T} \mathbb{F}_2 (t)\\
& \quad \leq K \mathbb{F}_2 (0) + \frac{1}{2} \int^T_0 \|\sqrt{\rho} \dot{u}\|_{L^2}^2 \mathrm{d} t + \frac{1}{2} \sup_{0 \leq t \leq T} \mathbb{F}_2 (t),
\end{align*}
provided that
$$
\mathbb{F}_0 \leq \varepsilon_1 \triangleq \min \left\{1, (2 K_3)^{-2}\right\}.
$$
This gives the desired \eqref{5.3}.

{\bf Step III. Proof of \eqref{5.4}.}
Multiplying \eqref{5.18} by $t$ and integrating the resulting equality over $(0,T)$, we arrive at
\begin{align*}
& \sup_{0 \leq t \leq T} \left[\mu t \|\nabla u\|_{L^2}^2 + \left(\mu + \lambda \right) t\|\divv u\|_{L^2}^2 \right] + \int^T_0 t \|\sqrt{\rho} \dot{u}\|_{L^2}^2 \mathrm{d} t\\
& \quad \leq K \mathbb{F}_1 (0) + K \sup_{0 \leq t \leq T}\big(t \|\nabla u\|_{L^2} \|P\|_{L^2}\big)+ K \Psi \kappa \int^T_0 t \|\nabla \theta \|_{L^2}^2 \mathrm{d} t + K \mu^{-4} \Theta^{10} \bar{\rho}^3 \int^T_0 t \mathbb{F}_2 (t) \|\sqrt{\mu}\nabla u\|_{L^2}^4 \mathrm{d} t\\
& \quad \leq \frac{K_1}{2} \sup_{0 \leq t \leq T} \left(t \Psi {\rm c_V} \|\sqrt{\rho} \theta \|_{L^2}^2 \right) + \frac{K_1}{2} \Psi \kappa \int^T_0 t \|\nabla \theta \|_{L^2}^2 \mathrm{d} t + K \mu^{-4} \Theta^{10} \bar{\rho}^3 \int^T_0 t \mathbb{F}_2 (t) \|\sqrt{\mu}\nabla u\|_{L^2}^4 \mathrm{d} t\\
& \qquad + K \mathbb{F}_1 (0) + \frac{\mu}{2} \sup_{0 \leq t \leq T} \left(t \|\nabla u\|_{L^2}^2 \right).
\end{align*}
This together with \eqref{5.13} multiplied by $t K_1$ shows that
\begin{align*}
& \sup_{0 \leq t \leq T}\big(t \mathbb{F}_2 (t)\big)+ \int^T_0 t \|\sqrt{\rho} \dot{u}\|_{L^2}^2 \mathrm{d} t + \Psi \kappa \int^T_0 t \|\nabla \theta \|_{L^2}^2 \mathrm{d} t\\
& \quad \leq K \mathbb{F}_1 (0) +  \frac{1}{2} \int^T_0 t \|\sqrt{\rho} \dot{u}\|_{L^2}^2 \mathrm{d} t + K \mu^{-2} \Theta^{12} \bar{\rho}^3 \int^T_0 t \mathbb{F}_2 (t) \|\sqrt{\mu}\nabla u\|_{L^2}^4 \mathrm{d} t + K {\rm c_V} \Psi \int^T_0 \|\sqrt{\rho} \theta\|_{L^2}^2 {\rm d} t\\
& \quad \leq K \mathbb{F}_1 (0) + \frac{1}{2} \int^T_0 t \|\sqrt{\rho} \dot{u}\|_{L^2}^2 \mathrm{d} t + K \mu^{-2} \Theta^{12} \bar{\rho}^3 \sup_{0 \leq t \leq T}\big(t \mathbb{F}_1 (t) \mathbb{F}_2 (t)^2\big)+ K {\rm c_V} \Psi \bar{\rho} \int^T_0 \|\na \theta\|_{L^2}^2 {\rm d} t\\
& \quad \leq K \Theta^2 \bar{\rho} \mathbb{F}_1 (0) + \frac{1}{2} \int^T_0 t \|\sqrt{\rho} \dot{u}\|_{L^2}^2 \mathrm{d} t + K_3 \mathbb{F}_0^{\frac12} \sup_{0 \leq t \leq T}\big(t \mathbb{F}_2 (t)\big)\\
& \quad \leq K \Theta^2 \bar{\rho} \mathbb{F}_1 (0) + \frac{1}{2} \int^T_0 t \|\sqrt{\rho} \dot{u}\|_{L^2}^2 \mathrm{d} t + \frac{1}{2} \sup_{0 \leq t \leq T}\big(t \mathbb{F}_2 (t)\big),
\end{align*}
which implies \eqref{5.4}.

{\bf Step IV. Proof of \eqref{5.5} and \eqref{5.6}.}
 Multiplying \eqref{1.1}$_3$ by $\dot\theta$ in $L^2$ leads to
\begin{align}
& \frac{\kappa}{2} \frac{{\rm d}}{{\rm d} t} \int |\nabla\theta|^2 {\rm d} x + {\rm c_V} \int  \rho \dot{\theta}^2 {\rm d} x\notag\\
&\quad = -\frac{\kappa}{2} \int \left( 2 \partial_i \theta \partial_iu^j \partial_j \theta - \divv u |\nabla \theta|^2\right){\rm d} x + \int \left(\lambda (\divv u)^2 + 2 \mu |D(u)|^2 \right) \dot{\theta} {\rm d} x -{\rm R} \int \rho \theta \divv u \dot{\theta} {\rm d} x\notag\\
&\quad \triangleq J_1 + J_2 + J_3.\label{5.19}
\end{align}
We deduce from \eqref{1.1}$_3$ and \eqref{5.8} that
\begin{align}
\|\na^2 \theta\|_{L^2}^2
& \leq K \kappa^{-2}  \left({\rm c_V}^2 \|\rho \dot{\theta}\|_{L^2}^2 + \|P \divv u\|_{L^2}^2 + \mu^2 \|\na u\|_{L^4}^4 \right) \notag\\
& \leq K \kappa^{-2} \left( {\rm c_V}^2 \bar{\rho} \|\sqrt{\rho} \dot{\theta} \|_{L^2}^2 + \Theta^4 \bar{\rho}^2 \|\theta\|_{L^{\infty}}^2 \|\na u\|_{L^2}^2 + \mu^2 \|\na u\|_{L^4}^4 \right) \notag\\
& \leq \frac{1}{2} \|\na^2 \theta\|_{L^2}^2 + K \kappa^{-4} \Theta^8 \bar{\rho}^4 \|\na \theta\|_{L^2}^2 \|\na u\|_{L^2}^4 + K \kappa^{-2} {\rm c_V}^2 \bar{\rho} \|\sqrt{\rho} \dot{\theta}\|_{L^2}^2 + K \mu^2 \kappa^{-2} \|\na u\|_{L^4}^4.\label{5.20}
\end{align}
Then it follows from \eqref{5.20} that
\begin{align*}
J_1
& \leq K \kappa \|\na \theta\|_{L^2} \|\na u\|_{L^3} \|\na \theta\|_{L^6}\\
& \leq \frac{{\rm c_V}}{6} \|\sqrt{\rho} \dot{\theta}\|_{L^2}^2 + K \Theta^6 \bar{\rho}^2 \kappa \|\na \theta\|_{L^2}^2 \left(\|\na u\|_{L^2}^4 + \|\na u\|_{L^3}^2 \right) + K \mu^2 \kappa^{-1} \|\na u\|_{L^3}^2 \|\na u\|_{L^6}^2\\
& \leq \frac{{\rm c_V}}{6} \|\sqrt{\rho} \dot{\theta}\|_{L^2}^2 + K \Theta^6 \bar{\rho}^2 \kappa \|\na \theta\|_{L^2}^2 \left(\|\na u\|_{L^2}^4 + \|\na u\|_{L^3}^2 \right) + K \mu^{-1} \Theta^4 \bar{\rho}^2 \Psi^{-1} \|\sqrt{\rho} \dot{u}\|_{L^2}^2 \|\na u\|_{L^3}^2,\\
J_2
& \leq \frac{{\rm d}}{{\rm d} t} \int \left(\lambda (\divv u)^2 + 2 \mu |D(u)|^2 \right) \theta {\rm d} x + K \frac{\mu + |\lambda|}{\mu} \mu \|\na \theta\|_{L^2} \|\na u\|_{L^3} \left(\|\na \dot{u}\|_{L^2} + \|\na u\|_{L^4}^2 \right)\\
& \leq \frac{{\rm d}}{{\rm d} t} \int \left(\lambda ({\rm div} u)^2 + 2 \mu |D(u)|^2 \right) \theta {\rm d} x + \delta \mu \Psi^{-1} \|\na \dot{u}\|_{L^2}^2 + K \mu^{-1} \Theta^4 \bar{\rho}^2 \Psi^{-1} \|\sqrt{\rho} \dot{u}\|_{L^2}^2 \|\na u\|_{L^3}^2 \\
& \quad + K \Theta^6 \bar{\rho}^2 \kappa \|\na \theta\|_{L^2}^2 \|\na u\|_{L^3}^2,
\\
J_3 &\leq K \Theta^{\frac{3}{2}} {\rm c_V}^{\frac{1}{2}} \bar{\rho}^{\frac{1}{2}} \|\sqrt{\rho} \dot{\theta}\|_{L^2} \|\na u\|_{L^3} \|\theta\|_{L^6} \leq \frac{{\rm c_V}}{4} \|\sqrt{\rho} \dot{\theta}\|_{L^2}^2 + K \Theta^4 \bar{\rho} \kappa \|\na \theta\|_{L^2}^2 \|\na u\|_{L^3}^2.
\end{align*}
Plugging the above estimates on $J_k\ (k=1,2,3)$ into \eqref{5.19} yields that
\begin{align}
& \Psi \kappa \frac{{\rm d}}{{\rm d} t} \int |\nabla\theta|^2 {\rm d} x + \Psi {\rm c_V} \int \rho \dot{\theta}^2 {\rm d} x \notag\\
&\quad \leq 2 \Psi \frac{{\rm d}}{{\rm d} t} \int \left(\lambda (\divv u)^2 + 2 \mu |D(u)|^2 \right) \theta {\rm d} x + 2 \delta \mu \|\na \dot{u}\|_{L^2}^2 + K \Theta^6 \bar{\rho}^2 \mathbb{F}_3 (t) \left(\|\na u\|_{L^2}^4 + \|\na u\|_{L^3}^2 \right).\label{5.21}
\end{align}

Operating $\partial_t+ \divv(u\cdot)$ to both sides of the $j$-th equation of (\ref{1.1})$_2$, multiplying the resultant by $\dot u^j$ in $L^2$, and adding them together, we arrive at
\begin{align}
\frac{1}{2} \frac{{\rm d}}{{\rm d} t} \int \rho |\dot u|^2 {\rm d} x
& = \mu \int \dot u^j \big[\Delta u^j_t + \partial_k (u^k \Delta u^j) \big] {\rm d} x + (\mu+\lambda) \int \dot u^j \big[\partial_j \divv u_{t} + \partial_k(u^k \partial_j \divv u) \big] {\rm d} x \notag\\
& \quad - \int \dot u^j \big[ \big( \partial_j P \big)_t + \partial_k \big(u^k \partial_j P \big) \big] {\rm d} x \triangleq \sum_{i=1}^3 L_i.\label{5.22}
\end{align}
Integration by parts together with H{\"o}lder's inequality leads to
\begin{align*}
L_1
& = - \mu \int \big(|\nabla \dot u|^2 - \partial_k \dot u^j \partial_k u^l \partial_l u^j + \partial_l \dot u^j \partial_k u^k \partial_l u^j - \partial_k \dot u^j \partial_l u^k \partial_{l} u^j \big) {\rm d} x\\
& \leq - \frac{5\mu}{6} \|\nabla \dot u\|_{L^2}^2 + K \mu^{-1} \|\na u\|_{L^3}^2 \left(\bar{\rho} \|\sqrt{\rho} \dot{u}\|_{L^2}^2 + \Theta^4 \bar{\rho}^2 \|\na \theta\|_{L^2}^2 \right)\\
& \leq - \frac{5\mu}{6} \|\nabla \dot u\|_{L^2}^2 + K \Theta^3 \bar{\rho} \mathbb{F}_3 (t) \|\na u\|_{L^3}^2,\\
L_2
& \leq - \left(\mu + \lambda \right) \|\divv \dot{u}\|_{L^2}^2 + K \left(\mu + \lambda \right) \|\na \dot{u}\|_{L^2} \|\na u\|_{L^4}^2\\
& \leq - \left(\mu + \lambda \right) \|\divv \dot{u}\|_{L^2}^2 + \frac{\mu}{6} \|\nabla \dot u\|_{L^2}^2 + K \Theta^5 \bar{\rho} \mathbb{F}_3 (t) \|\na u\|_{L^3}^2, \\
L_3
& = - \int \dot u^j \big[ \partial_j \big({\rm R} \rho \dot \theta \big) - \partial_k \big(P \partial_j u^k \big) \big] {\rm d} x\\
&\leq K {\rm c_V}(\gamma - 1) \bar{\rho}^{\frac{1}{2}} \|\nabla \dot u\|_{L^2} \|\sqrt\rho \dot \theta\|_{L^2} + K {\rm R} \bar{\rho} \|\nabla\dot u\|_{L^2} \|\nabla u\|_{L^3} \|\nabla\theta\|_{L^2} \\
&\leq \frac{\mu}{8} \|\nabla\dot u\|_{L^2}^2 + K \mu^{-1} {\rm c_V}^2(\gamma - 1)^2 \bar{\rho} \| \sqrt\rho \dot \theta\|_{L^2}^2 + K \mu^{-1} {\rm R}^2 \bar{\rho}^2 \|\nabla \theta\|_{L^2}^2 \|\na u\|_{L^3}^2\\
&\leq \frac{\mu}{8} \|\nabla\dot u\|_{L^2}^2 + K \Psi {\rm c_V} \| \sqrt\rho \dot \theta\|_{L^2}^2 + K \Theta^2 \bar{\rho} \mathbb{F}_3 (t) \|\na u\|_{L^3}^2.
\end{align*}
Plugging the above estimates into \eqref{5.22} and multiplying the resultant by $t$, we obtain after integrating it with respect to $t$ that for a suitably large positive constant $K_4 \geq 2$,
\begin{align*}
& \sup_{0 \leq t \leq T}\big(t \|\sqrt{\rho} \dot{u}\|_{L^2}^2\big)+ \mu \int^T_0 t \|\na \dot{u}\|_{L^2}^2 {\rm d} t + \left( \mu + \lambda \right) \int^T_0 t \|\divv \dot{u}\|_{L^2}^2 {\rm d} t\\
& \quad \leq K \int^T_0 \|\sqrt{\rho} \dot{u}\|_{L^2}^2 {\rm d} t + \frac{K_4}{2} \Psi {\rm c_V} \int^T_0 t \|\sqrt{\rho} \dot{\theta}\|_{L^2}^2 {\rm d} t + K \Theta^5 \bar{\rho} \int^T_0 t \mathbb{F}_3 (t) \|\na u\|_{L^3}^2 {\rm d} t.
\end{align*}
This combined with (\ref{5.21}) multiplied by $K_4 t$ indicates that
\begin{align*}
& \sup_{0 \leq t \leq T}\big(t \mathbb{F}_3(t)\big)+ \mu \int^T_0 t \|\na \dot{u}\|_{L^2}^2 {\rm d} t + \left( \mu + \lambda \right) \int^T_0 t \|\divv \dot{u}\|_{L^2}^2 {\rm d} t + \frac{K_4}{2} \Psi {\rm c_V} \int^T_0 t \|\sqrt{\rho} \dot{\theta}\|_{L^2}^2 {\rm d} t \\
& \quad \leq K \int^T_0 \left(\|\sqrt{\rho} \dot{u}\|_{L^2}^2 + \Psi \kappa \|\na \theta\|_{L^2}^2 \right) {\rm d} t + K \Theta^6 \bar{\rho}^2 \int^T_0 t \mathbb{F}_3 (t) \left(\|\na u\|_{L^3}^2 + \|\na u\|_{L^2}^4 \right) {\rm d} t\\
& \qquad + K \Psi \Theta \mu \sup_{0 \leq t \leq T}\big(t \|\na u\|_{L^2} \|\na u\|_{L^3} \|\theta\|_{L^6}\big)+ K \Psi \Theta \mu \int^T_0 \|\na u\|_{L^2} \|\na u\|_{L^3} \|\theta\|_{L^6} {\rm d} t\\
& \quad \leq K \mathbb{F}_2 (0) + \frac{\Psi \kappa}{4} \sup_{0 \leq t \leq T} \big(t\|\na \theta\|_{L^2}^2\big)+ K \mu^{-2} \Theta^6 \bar{\rho}^2 \sup_{0 \leq t \leq T}\big(t \mathbb{F}_3 (t)\big)\sup_{0 \leq t \leq T} \|\sqrt{\mu}\na u\|_{L^2}^2 \int^T_0 \|\sqrt{\mu}\na u\|_{L^2}^2 {\rm d} t\\
& \qquad + K \mu^{-\frac{1}{2}} \Theta^6 \bar{\rho}^2 \sup_{0 \leq t \leq T}\big(t \mathbb{F}_3 (t)\big)\left(\int^T_0 \|\sqrt{\mu}\na u\|_{L^2}^2 {\rm d} t \right)^{\frac{1}{2}} \left[\int^T_0 \mu^{-2} \left(\bar{\rho} \|\sqrt{\rho} \dot{u}\|_{L^2}^2 + \Theta^4 \bar{\rho}^2 \|\na \theta\|_{L^2}^2 \right){\rm d} t \right]^{\frac{1}{2}}\\
& \qquad + K \int^T_0 \left( \Psi \kappa \|\na \theta\|_{L^2}^2 + \kappa^{-1} \mu^2 \Psi \Theta^2 \|\na u\|_{L^2}^3 \|\na u\|_{L^6} \right) {\rm d} t + K \mu^2 \kappa^{-1} \Psi \Theta^2 \sup_{0 \leq t \leq T} t \|\na u\|_{L^2}^3 \|\na u\|_{L^6}\\
& \quad \leq  K \mathbb{F}_0^{\frac{1}{4}} \Big(1 + \mathbb{F}_0^{\frac{1}{4}} \Big) \sup_{0 \leq t \leq T}\big(t \mathbb{F}_3 (t)\big)+ K \int^T_0 \left( \Psi \kappa \|\na \theta\|_{L^2}^2 + \|\sqrt{\rho} \dot{u}\|_{L^2}^2 + \mu^{-2} \Theta^{14} \bar{\rho}^3 \|\sqrt{\mu}\na u\|_{L^2}^6 \right) {\rm d} t\\
& \qquad + K \mathbb{F}_2 (0) + \frac{3}{8} \sup_{0 \leq t \leq T}\big(t \mathbb{F}_3 (t)\big) + K \mu^{-2} \Theta^{14} \bar{\rho}^3 \sup_{0 \leq t \leq T} \big(t \|\sqrt{\mu} \na u\|_{L^2}^6 \big) \\
& \quad \leq  K_5 \mathbb{F}_0^{\frac{1}{4}} \sup_{0 \leq t \leq T}\big(t \mathbb{F}_3 (t)\big)+ K \mathbb{F}_2 (0) + \frac{3}{8} \sup_{0 \leq t \leq T}\big(t \mathbb{F}_3 (t)\big)\leq K \mathbb{F}_2 (0) + \frac{1}{2} \sup_{0 \leq t \leq T}\big(t \mathbb{F}_3 (t)\big),
\end{align*}
provided that
$$
\mathbb{F}_0 \leq \varepsilon_2 \triangleq \min \left\{\varepsilon_1, (8 K_5)^{-4} \right\}.
$$
This leads to \eqref{5.5}.

Similarly, one has
\begin{align*}
& \sup_{0 \leq t \leq T}\big(t^2 \mathbb{F}_3(t)\big)+ \mu \int^T_0 t^2 \|\na \dot{u}\|_{L^2}^2 {\rm d} t + \left( \mu + \lambda \right) \int^T_0 t^2 \|\divv \dot{u}\|_{L^2}^2 {\rm d} t + \frac{K_4}{2} \Psi {\rm c_V} \int^T_0 t^2 \|\sqrt{\rho} \dot{\theta}\|_{L^2}^2 {\rm d} t \\
& \quad \leq K \int^T_0 t \left(\|\sqrt{\rho} \dot{u}\|_{L^2}^2 + \Psi \kappa \|\na \theta\|_{L^2}^2 \right) {\rm d} t + K \Theta^6 \bar{\rho}^2 \int^T_0 t^2 \mathbb{F}_3 (t) \left(\|\na u\|_{L^3}^2 + \|\na u\|_{L^2}^4 \right) {\rm d} t\\
& \qquad + K \Psi \Theta \mu \sup_{0 \leq t \leq T}\big(t^2 \|\na u\|_{L^2} \|\na u\|_{L^3} \|\theta\|_{L^6}\big)+ K \Psi \Theta \mu \int^T_0 t \|\na u\|_{L^2} \|\na u\|_{L^3} \|\theta\|_{L^6} {\rm d} t\\
& \quad \leq  K_5 \mathbb{F}_0^{\frac{1}{4}} \sup_{0 \leq t \leq T}\big(t^2 \mathbb{F}_3 (t)\big)+ K \int^T_0 t \big( \Psi \kappa \|\na \theta\|_{L^2}^2 + \|\sqrt{\rho} \dot{u}\|_{L^2}^2 + \mu^{-2} \Theta^{14} \bar{\rho}^3 \| \sqrt{\mu} \na u\|_{L^2}^6 \big) {\rm d} t\\
& \qquad + K \Theta^2 \bar{\rho} \mathbb{F}_1 (0) + \frac{3}{8} \sup_{0 \leq t \leq T} \big(t^2 \mathbb{F}_3 (t)\big)+ K \mu^{-2} \Theta^{14} \bar{\rho}^3 \bar{\rho} \sup_{0 \leq t \leq T}\big(t^2 \|\sqrt{\mu}\na u\|_{L^2}^6\big) \\
& \quad \leq K \Theta^2 \bar{\rho} \mathbb{F}_1 (0) + \frac12\sup_{0 \leq t \leq T}\big(t^2 \mathbb{F}_3 (t)\big),
\end{align*}
which implies the desired \eqref{5.6}.

{\bf Step V. Proof of (\ref{5.7}).}
It follows from (\ref{5.3}) and (\ref{5.9}) that
$$
\mathbb{F} (t) \leq K_6 \mathbb{F}_0 \leq \frac{1}{2} \mathbb{F}_0^{\frac{1}{2}},
$$
provided that
$$
\mathbb{F}_0 \leq \varepsilon_3 \triangleq \min \left\{\varepsilon_2, (2 K_6)^{-2} \right\}.
$$

It follows from Lemma \ref{lem3.4}, \eqref{5.12}, and \eqref{5.20} that
\begin{align*}
\Theta \|{\rm div} u\|_{L^{\infty}}
& \leq K \mu^{-1} \Theta \|F\|_{L^6}^{\frac{1}{2}} \|\na F\|_{L^6}^{\frac{1}{2}} + K \mu^{-1} \Theta^3 \bar{\rho} \|\theta\|_{L^6}^{\frac{1}{2}} \|\na \theta\|_{L^6}^{\frac{1}{2}}\\
& \leq K \mu^{-1} \Theta \bar{\rho}^{\frac{3}{4}} \|\sqrt{\rho} \dot{u}\|_{L^2}^{\frac{1}{2}} \|\na \dot{u}\|_{L^2}^{\frac{1}{2}} +  K \mu^{-1} \Theta^3 \bar{\rho} \|\na \theta\|_{L^2}^{\frac{1}{2}} \|\na^2 \theta\|_{L^2}^{\frac{1}{2}}\\
& \leq K \mu^{-1} \Theta \bar{\rho}^{\frac{3}{4}} \|\sqrt{\rho} \dot{u}\|_{L^2}^{\frac{1}{2}} \|\na \dot{u}\|_{L^2}^{\frac{1}{2}} + K \mu^{-1} \kappa^{-1} \Theta^5 \bar{\rho}^2 \|\na \theta\|_{L^2} \|\na u\|_{L^2}\\
& \quad + K \mu^{-1} \kappa^{-\frac{1}{2}} \Theta^3 {\rm c_V}^{\frac{1}{2}} \bar{\rho}^{\frac{5}{4}} \|\na \theta\|_{L^2}^{\frac{1}{2}} \|\sqrt{\rho} \dot{\theta}\|_{L^2}^{\frac{1}{2}} + K \mu^{-\frac{1}{2}} \kappa^{-\frac{1}{2}} \Theta^3 \bar{\rho} \|\na \theta\|_{L^2}^{\frac{1}{2}} \|\na u\|_{L^4}\\
& \leq K \mu^{-\frac{5}{4}} \Theta \bar{\rho}^{\frac{3}{4}} \|\sqrt{\rho} \dot{u}\|_{L^2}^{\frac{1}{2}} \|\mu^{\frac{1}{2}}\na \dot{u}\|_{L^2}^{\frac{1}{2}} + K \mu^{-1} \Theta^5 \bar{\rho}^{\frac{3}{2}} \|\Psi^{\frac{1}{2}} \kappa^{\frac{1}{2}} \na \theta\|_{L^2} \|\mu^{\frac{1}{2}}\na u\|_{L^2}\\
& \quad + K \mu^{-1} \kappa^{-\frac{1}{2}} \Theta^3 {\rm c_V}^{\frac{1}{2}} \bar{\rho}^{\frac{5}{4}} \|\na \theta\|_{L^2}^{\frac{1}{2}} \|\sqrt{\rho} \dot{\theta}\|_{L^2}^{\frac{1}{2}} + K \mu^{-\frac{1}{2}} \kappa^{-\frac{1}{2}} \Theta^3 \bar{\rho} \|\na \theta\|_{L^2}^{\frac{1}{2}} \|\na u\|_{L^2}^{\frac{1}{4}} \|\na u\|_{L^6}^{\frac{3}{4}} \\
& \leq K \mu^{-\frac{1}{2}} \Theta^{\frac{5}{2}} \bar{\rho}^{\frac{3}{4}} \|(\sqrt{\rho} \dot{u}, \Psi^{\frac{1}{2}} \kappa^{\frac{1}{2}} \na \theta)\|_{L^2}^{\frac{1}{2}} \|(\mu^{\frac{1}{2}}\na \dot{u}, \Psi^{\frac{1}{2}} {\rm c_V}^{\frac{1}{2}} \sqrt{\rho} \dot{\theta})\|_{L^2}^{\frac{1}{2}}\\
& \quad + K \mu^{-1} \Theta^5 \bar{\rho}^{\frac{3}{2}} \|\Psi^{\frac{1}{2}} \kappa^{\frac{1}{2}} \na \theta\|_{L^2} \|\mu^{\frac{1}{2}}\na u\|_{L^2} + K \mu^{-\frac{3}{4}} \Theta^{\frac{15}{4}} \bar{\rho}^{\frac{9}{8}} \|(\sqrt{\rho} \dot{u}, \Psi^{\frac{1}{2}} \kappa^{\frac{1}{2}} \na \theta)\|_{L^2}^{\frac{5}{4}} \|\mu^{\frac{1}{2}}\na u\|_{L^2}^{\frac{1}{4}}.
\end{align*}
In order to prove the second estimate in \eqref{5.7}, we denote by $\Lambda \triangleq \min\{\mu^{-1} \mathbb{F}_1 (0)^2, T\}$. Then it follows from (\ref{5.2})--(\ref{5.6}) that
\begin{align*}
& K \mu^{-\frac{1}{2}} \Theta^{\frac{5}{2}} \bar{\rho}^{\frac{3}{4}} \int^T_0 \|(\sqrt{\rho} \dot{u}, \Psi^{\frac{1}{2}} \kappa^{\frac{1}{2}} \na \theta)\|_{L^2}^{\frac{1}{2}} \|(\mu^{\frac{1}{2}}\na \dot{u}, \Psi^{\frac{1}{2}} {\rm c_V}^{\frac{1}{2}} \sqrt{\rho} \dot{\theta})\|_{L^2}^{\frac{1}{2}} {\rm d} t \\
& \quad \leq K \mu^{-\frac{1}{2}} \Theta^{\frac{5}{2}} \bar{\rho}^{\frac{3}{4}} \int^{\Lambda}_0 \|(\sqrt{\rho} \dot{u}, \Psi^{\frac{1}{2}} \kappa^{\frac{1}{2}} \na \theta)\|_{L^2}^{\frac{1}{2}} t^{\frac{1}{4}} \|(\mu^{\frac{1}{2}}\na \dot{u}, \Psi^{\frac{1}{2}} {\rm c_V}^{\frac{1}{2}} \sqrt{\rho} \dot{\theta})\|_{L^2}^{\frac{1}{2}} t^{-\frac{1}{4}} {\rm d} t\\
& \qquad + K \mu^{-\frac{1}{2}} \Theta^{\frac{5}{2}} \bar{\rho}^{\frac{3}{4}} \int_{\Lambda}^T t^{\frac{1}{4}} \|(\sqrt{\rho} \dot{u}, \Psi^{\frac{1}{2}} \kappa^{\frac{1}{2}} \na \theta)\|_{L^2}^{\frac{1}{2}} t^{\frac{3}{8}} \|(\mu^{\frac{1}{2}}\na \dot{u}, \Psi^{\frac{1}{2}} {\rm c_V}^{\frac{1}{2}} \sqrt{\rho} \dot{\theta})\|_{L^2}^{\frac{1}{2}} t^{- \frac{5}{8}} {\rm d} t\\
& \quad \leq K \mu^{-\frac{1}{2}} \Theta^{\frac{5}{2}} \bar{\rho}^{\frac{3}{4}} \left(\int^{\Lambda}_0 \|(\sqrt{\rho} \dot{u}, \Psi^{\frac{1}{2}} \kappa^{\frac{1}{2}} \na \theta)\|_{L^2}^2 {\rm d} t \right)^{\frac{1}{4}} \left(\int^{\Lambda}_0  t \|(\mu^{\frac{1}{2}}\na \dot{u}, \Psi^{\frac{1}{2}} {\rm c_V}^{\frac{1}{2}} \sqrt{\rho} \dot{\theta})\|_{L^2}^2 {\rm d} t \right)^{\frac{1}{4}} \left(\int^{\Lambda}_0 t^{-\frac{1}{2}} {\rm d} t \right)^{\frac{1}{2}}\\
& \qquad + K \mu^{-\frac{1}{2}} \Theta^{\frac{5}{2}} \bar{\rho}^{\frac{3}{4}} \left(\int_{\Lambda}^T t \|(\sqrt{\rho} \dot{u}, \Psi^{\frac{1}{2}} \kappa^{\frac{1}{2}} \na \theta)\|_{L^2}^2 {\rm d} t \right)^{\frac{1}{4}} \left(\int_{\Lambda}^T t \|(\mu^{\frac{1}{2}}\na \dot{u}, \Psi^{\frac{1}{2}} {\rm c_V}^{\frac{1}{2}} \sqrt{\rho} \dot{\theta})\|_{L^2}^2 {\rm d} t \right)^{\frac{1}{8}}\\
& \qquad \cdot \left( \int_{\Lambda}^T t^2 \|(\mu^{\frac{1}{2}}\na \dot{u}, \Psi^{\frac{1}{2}} {\rm c_V}^{\frac{1}{2}} \sqrt{\rho} \dot{\theta})\|_{L^2}^2 {\rm d} t\right)^{\frac{1}{8}} \left(\int_{\Lambda}^T t^{- \frac{5}{4}} {\rm d} t \right)^{\frac{1}{2}}\\
& \quad \leq K \mu^{-\frac{3}{4}} \Theta^{\frac{5}{2}} \bar{\rho}^{\frac{3}{4}} \mathbb{F}_1 (0)^{\frac{1}{2}} \mathbb{F}_2 (0)^{\frac{1}{2}} + K \mu^{-\frac{3}{8}} \Theta^{\frac{13}{4}} \bar{\rho}^{\frac{9}{8}} \mathbb{F}_1 (0)^{\frac{1}{8}} \mathbb{F}_2 (0)^{\frac{1}{8}}\\
& \quad \leq K \mu^{-\frac{3}{4}} \Theta^{\frac{5}{2}} \bar{\rho}^{\frac{3}{4}} \sup_{0 \leq t \leq T}\Big(\mathbb{F}_1 (t)^{\frac{1}{2}} \mathbb{F}_2 (t)^{\frac{1}{2}}\Big)+ K \mu^{-\frac{3}{8}} \Theta^{\frac{13}{4}} \bar{\rho}^{\frac{9}{8}} \sup_{0 \leq t \leq T}\Big(\mathbb{F}_1 (t)^{\frac{1}{8}} \mathbb{F}_2 (t)^{\frac{1}{8}}\Big)\leq K \mathbb{F}_0^{\frac{1}{16}},\\
& K \mu^{-1} \Theta^5 \bar{\rho}^{\frac{3}{2}} \int^T_0 \|\Psi^{\frac{1}{2}} \kappa^{\frac{1}{2}} \na \theta\|_{L^2} \|\mu^{\frac{1}{2}}\na u\|_{L^2} {\rm d} t \leq K \mu^{-1} \Theta^5 \bar{\rho}^{\frac{3}{2}} \left(\int^T_0 \|\Psi^{\frac{1}{2}} \kappa^{\frac{1}{2}} \na \theta\|_{L^2}^2 {\rm d} t \right)^{\frac{1}{2}} \left(\int^T_0 \|\mu^{\frac{1}{2}}\na u\|_{L^2}^2 {\rm d} t \right)^{\frac{1}{2}}\\
&\quad \leq K \mu^{-1} \Theta^5 \bar{\rho}^{\frac{3}{2}} \sup_{0 \leq t \leq T} \Big(\mathbb{F}_1 (t)^{\frac{1}{2}} \mathbb{F}_2 (t)^{\frac{1}{2}}\Big)\leq K \mathbb{F}_0^{\frac{1}{4}},
\end{align*}
and
\begin{align*}
& K \mu^{-\frac{3}{4}} \Theta^{\frac{15}{4}} \bar{\rho}^{\frac{9}{8}} \int^T_0 \|(\sqrt{\rho} \dot{u}, \Psi^{\frac{1}{2}} \kappa^{\frac{1}{2}} \na \theta)\|_{L^2}^{\frac{5}{4}} \|\mu^{\frac{1}{2}}\na u\|_{L^2}^{\frac{1}{4}} {\rm d} t\\
& \quad \leq K \mu^{-\frac{3}{4}} \Theta^{\frac{15}{4}} \bar{\rho}^{\frac{9}{8}} \left(\int_{\Lambda}^T t \|(\sqrt{\rho} \dot{u}, \Psi^{\frac{1}{2}} \kappa^{\frac{1}{2}} \na \theta)\|_{L^2}^2 {\rm d} t \right)^{\frac{1}{2}}  \left(\int_{\Lambda}^T \|(\sqrt{\rho} \dot{u}, \Psi^{\frac{1}{2}} \kappa^{\frac{1}{2}} \na \theta)\|_{L^2}^2 {\rm d} t \right)^{\frac{1}{8}} \left(\int_{\Lambda}^T \|\mu^{\frac{1}{2}}\na u\|_{L^2}^2 {\rm d} t \right)^{\frac{1}{8}} \\
& \qquad \cdot \left(\int_{\Lambda}^T t^{-2} {\rm d} t \right)^{\frac{1}{4}} + K \mu^{-\frac{3}{4}} \Theta^{\frac{15}{4}} \bar{\rho}^{\frac{9}{8}} \left(\int^{\Lambda}_0 \|(\sqrt{\rho} \dot{u}, \Psi^{\frac{1}{2}} \kappa^{\frac{1}{2}} \na \theta)\|_{L^2}^2 {\rm d} t \right)^{\frac{5}{8}} \left(\int^{\Lambda}_0 \|\mu^{\frac{1}{2}}\na u\|_{L^2}^2 {\rm d} t \right)^{\frac{1}{8}} \left(\int^{\Lambda}_0 {\rm d} t \right)^{\frac{1}{4}}\\
& \quad \leq K \mu^{-1} \Theta^{\frac{15}{4}} \bar{\rho}^{\frac{9}{8}} \sup_{0 \leq t \leq T}\Big(\mathbb{F}_1 (t)^{\frac{5}{8}} \mathbb{F}_2 (t)^{\frac{5}{8}}\Big)+ K \mu^{-\frac{1}{2}} \Theta^{\frac{19}{4}} \bar{\rho}^{\frac{13}{8}} \sup_{0 \leq t \leq T} \Big(\mathbb{F}_1 (t)^{\frac{1}{8}} \mathbb{F}_2 (t)^{\frac{1}{8}}\Big)\leq K \mathbb{F}_0^{\frac{1}{16}}.
\end{align*}
Therefore, one can easily get
\begin{align*}
\Theta \int^T_0 \|\divv u\|_{L^{\infty}} {\rm d} t \leq K \mathbb{F}_0^{\frac{1}{16}} \bigg(1 + \mathbb{F}_0^{\frac{3}{16}} \bigg) \leq K_7 \mathbb{F}_0^{\frac{1}{16}} \leq \frac{1}{2} \mathbb{F}_0^{\frac{1}{32}},
\end{align*}
provided that
$$
\mathbb{F}_0 \leq \varepsilon \triangleq \min \left\{\varepsilon_3, (2 K_7)^{-32} \right\}.
$$
The proof of Proposition \ref{pro5.1} is complete.
\end{proof}

With the global {\it a priori} estimates in Proposition \ref{pro5.1} at hand, we now show Theorem \ref{thm2.2}.

{\bf Proof of Theorem \ref{thm2.2}}. The global existence of solutions can be obtained from Proposition \ref{pro5.1} and blow-up criterion \eqref{3.3}.
Indeed, taking $s=4$ and $r=6$ in \eqref{3.2} and using \eqref{5.2}, \eqref{5.3}, and \eqref{5.7}, there exists a positive constant $C$ independent of $T$ such that
$$
\int_{0}^T\|\divv u\|_{L^\infty} {\rm d}t+\int_{0}^T\|u\|_{L^6}^4{\rm d}t \le
\int_{0}^T \|\divv u\|_{L^\infty} {\rm d} t + C \sup_{0\leq t\leq T}\|\nabla u\|_{L^2}^{2}
\int_{0}^T \|\nabla u\|_{L^2}^{2}{\rm d}t \le C,
$$
which combined with \eqref{3.3} implies that $T=\infty$. Furthermore, the proof of decay rate \eqref{2.15} is similar to that in \cite{XZ2025}, and we omit the details for simplicity. This completes the proof of Theorem \ref{thm2.2}.
\hfill$\square$

\subsection*{Conflict of Interest}
The authors declare that they have no conflict of interest.

\subsection*{Data Availability}
No data was used for the research described in the article.


\begin{thebibliography}{99}

\bibitem{CD10}
F. Charve and R. Danchin, A global existence result for the compressible Navier--Stokes equations in the critical $L^p$ framework, \textit{Arch. Ration. Mech. Anal.,} \textbf{198} (2010), 233--271.

\bibitem{CCZ10}
Q. Chen, C. Miao, and Z. Zhang, Global well-posedness for compressible Navier--Stokes equations with highly oscillating initial velocity, {\it Comm. Pure Appl. Math.,} \textbf{63} (2010), 1173--1224.

\bibitem{Da00}
R. Danchin, Global existence in critical spaces for compressible Navier--Stokes equations, {\it Invent. Math.,} \textbf{141} (2000), 579--614.

\bibitem{DM17}
R. Danchin and P. B. Mucha, Compressible Navier--Stokes system: large solutions and incompressible limit, {\it Adv. Math.}, {\bf320} (2017), 904--925.

\bibitem{DM23}
R. Danchin and P. B. Mucha, Compressible Navier--Stokes equations with ripped density, {\it Comm. Pure Appl. Math.,} \textbf{76} (2023), 3437--3492.

\bibitem{Fe2004} E. Feireisl,
Dynamics of viscous compressible fluids,
Oxford University Press, Oxford, 2004.

\bibitem{EF01}
E. Feireisl, A. Novotn\'{y}, and H. Petzeltov\'a, On the existence of globally defined weak solutions to the Navier--Stokes equations, \textit{J. Math. Fluid Mech.,} \textbf{3} (2001), 358--392.

\bibitem{Hoff95}
D. Hoff, Global solutions of the Navier--Stokes equations for multidimensional compressible flow with discontinuous initial data, \textit{J. Differential Equations,} \textbf{120} (1995), 215--254.

\bibitem{Hoff95*}
D. Hoff, Strong convergence to global solutions for multidimensional flows of compressible, viscous fluids with polytropic equations of state and discontinuous initial data, \textit{Arch. Ration. Mech. Anal.,} \textbf{132} (1995), 1--14.

\bibitem{Ho1997} D. Hoff,
Discontinuous solutions of the Navier--Stokes equations for multidimensional flows of heat-conducting fluids,
{\it Arch. Ration. Mech. Anal.}, {\bf 139} (1997), 303--354.

\bibitem{HP24} G. Hong, X. Hou, H. Peng, and C. Zhu,
Global existence for a class of large solution to compressible Navier--Stokes equations with vacuum,
{\it Math. Ann.}, {\bf 388} (2024), 2163--2194.

\bibitem{Hu2020} X. Huang,
On local strong and classical solutions to the three-dimensional barotropic compressible Navier--Stokes equations with vacuum,
{\it Sci. China Math.}, {\bf 64} (2021), 1771--1788.

\bibitem{HL2018} X. Huang and J. Li,
Global classical and weak solutions to the three-dimensional full compressible Navier--Stokes system with vacuum and large oscillations,
{\it Arch. Ration. Mech. Anal.}, {\bf 227} (2018), 995--1059.

\bibitem{HLW13} X. Huang, J. Li, and Y. Wang,
Serrin-type blowup criterion for full compressible Navier--Stokes system,
{\it Arch. Ration. Mech. Anal.}, {\bf 207} (2013), 303--316.

\bibitem{HLX11} X. Huang, J. Li, and Z. Xin,
Serrin-type criterion for the three-dimensional viscous compressible flows,
{\it SIAM J. Math. Anal.}, {\bf 43} (2011), 1872--1886.

\bibitem{HLX2012} X. Huang, J. Li, and Z. Xin,
Global well-posedness of classical solutions with large oscillations and vacuum to the three-dimensional isentropic compressible Navier--Stokes equations,
{\it Comm. Pure Appl. Math.}, {\bf 65} (2012), 549--585.

\bibitem{JZ01}
S. Jiang and P. Zhang, On spherically symmetric solutions of the compressible isentropic Navier--Stokes equations, \textit{Comm. Math. Phys.,} \textbf{215} (2001), 559--581.

\bibitem{JZ03}
S. Jiang and P. Zhang, Axisymmetric solutions of the 3D Navier--Stokes equations for compressible isentropic fluids, {\it J. Math. Pures Appl.}, {\bf 82} (2003), 949--973.

\bibitem{LXZ2022} S. Lai, H. Xu, and J. Zhang,
Well-posedness and exponential decay for the Navier--Stokes equations of viscous compressible heat-conductive fluids with vacuum,
{\it Math. Models Meth. Appl. Sci.}, {\bf 32} (2022), 1725--1784.

\bibitem{LX2019-1} Z. Lei and Z. Xin,
On scaling invariance and type-I singularities for the compressible Navier--Stokes equations, {\it Sci. China Math.}, {\bf 62} (2019), 2271--2286.

\bibitem{book17}
G. Leoni, A first course in Sobolev spaces, 2nd ed., American Mathematical Society, Providence, RI, 2017.

\bibitem{LX2019} J. Li and Z. Xin,
Global well-posedness and large time asymptotic behavior of classical solutions to the compressible Navier--Stokes equations with vacuum,
{\it Ann. PDE}, {\bf 5} (2019), Paper No. 7.

\bibitem{Li2020} J. Li,
Global small solutions of heat conductive compressible Navier--Stokes equations with vacuum: smallness on scaling invariant quantity,
{\it Arch. Ration. Mech. Anal.}, {\bf 237} (2020), 899--919.

\bibitem{LZ2023} J. Li and Y. Zheng,
Local existence and uniqueness of heat conductive compressible Navier--Stokes equations in the presence of vacuum without initial compatibility conditions,
{\it J. Math. Fluid Mech.}, {\bf 25} (2023), Paper No. 14.

\bibitem{L21} Z. Liang,
Global strong solutions of Navier--Stokes equations for heat-conducting compressible fluids with vacuum at infinity,
{\it J. Math. Fluid Mech.}, {\bf 23} (2021), Paper No. 17.

\bibitem{Li1998} P.-L. Lions,
Mathematical topics in fluid mechanics. Vol. 2. Compressible models,
Oxford University Press, New York, 1998.

\bibitem{MN1980} A. Matsumura and T. Nishida,
The initial value problem for the equations of motion of viscous and heat-conductive gases, {\it J. Math. Kyoto Univ.}, {\bf 20} (1980), 67--104.

\bibitem{M1}
F. Merle, P. Rapha\"el, I. Rodnianski, and J. Szeftel, On the implosion of a compressible fluid I: smooth self-similar inviscid profiles,
{\it Ann. of Math.}, {\bf 196} (2022), 567--778.

\bibitem{M2}
F. Merle, P. Rapha\"el, I. Rodnianski, and J. Szeftel, On the implosion of a compressible fluid II: singularity formation, {\it Ann. of Math.}, {\bf 196} (2022), 779--889.

\bibitem{W2025} H. Wen,
Global wellposedness of compressible Navier--Stokes equations with vacuum and smallness on scaling invariant quantity in $\mathbb{R}^3$,
{\it Adv. Math.}, {\bf 482} (2025), Paper No. 110628.

\bibitem{WZ2017} H. Wen and C. Zhu,
Global solutions to the three-dimensional full compressible Navier--Stokes equations with vacuum at infinity in some classes of large data,
{\it SIAM J. Math. Anal.}, {\bf 49} (2017), 162--221.

\bibitem{XZ2025} H. Xu and J. Zhang,
Global stability of large solutions to the three-dimensional full compressible Navier--Stokes equations with vacuum,
{\it J. Math. Phys.}, {\bf 66} (2025), Paper No. 081517.

\bibitem{Z2000} A. A. Zlotnik,
Uniform estimates and stabilization of symmetric solutions of a system of quasilinear equations,
{\it Differ. Equ.}, {\bf 36} (2000), 701--716.

\end{thebibliography}
\end{document}